\newtheorem{theorem}{Theorem}[section]
\newtheorem{lemma}[theorem]{Lemma}
\numberwithin{equation}{section}
\begin{document}
\title[Uniqueness of topological solutions]{Uniqueness of topological multi-vortex solutions for a skew-symmetric Chern-Simons system}

\author{Hsin-Yuan Huang}
\address[Hsin-Yuan Huang]{Department of Applied Mathematics, National Sun Yat-sen Universtiy, Kaoshiung, Taiwan}
\email{hyhuang@math.nsysu.edu.tw}

\author{Youngae Lee}
\address[Youngae Lee] {Center for Advanced Study in Theoretical Sciences,
National Taiwan University,
No.1, Sec. 4, Roosevelt Road, Taipei 106, Taiwan}
\email{youngaelee0531@gmail.com}

\author{Chang-Shou Lin}
\address[Chang-Shou Lin] {Taida Institute for Mathematical Sciences,
Center for Advanced Study in Theoretical Sciences,
National Taiwan University,
No.1, Sec. 4, Roosevelt Road, Taipei 106, Taiwan}
\email{cslin@tims.ntu.edu.tw}

\begin{abstract}

Consider  the following  skew-symmetric Chern-Simons system
\begin{equation*}\left \{
\begin{split}
&\Delta u_{1}+\frac{1}{\varepsilon^2} e^{u_{2}}(1-e^{u_{1}})=4\pi \sum^{N_1}_{j=1}\delta_{p_{j,1}}\\
&\Delta u_{2}+\frac{1}{\varepsilon^2} e^{u_{1}}(1-e^{u_{2}})=4\pi \sum^{N_2}_{j=1}\delta_{p_{j,2}}
\end{split}\right.\quad\text{ in }\quad\Omega,
\end{equation*}
where $\Omega$ is   a flat 2-dimensional torus $\mathbb{T}^2$ or $\mathbb{R}^2$, $\varepsilon> 0$ is a coupling parameter, and $\delta_p$ denotes the Dirac measure concentrated
at $p$.
In this paper, we prove that, when the coupling parameter $\varepsilon$ is small, the topological type solutions to the above system are uniquely determined by the location of their vortex points. This result follows by the bubbling analysis and the non-degency of linearized equations.

\end{abstract}

\date{\today}
\keywords{skew-symmetric Chern-Simons system; topological solutions; Pohozaev type  identity}
\maketitle
\begin{section}{Introduction}

In recent years,  various Chern--Simons models have been proposed to study condensed matter physics and particle physics, such as the relativistic Chern-Simons models of high temperature superconductivity \cite{JW,Dunne1}, Lozano-Marqu\'es-Moreno-Schaposnik model \cite{LMMS} of bosonic sector of  $\mathcal{N}=2$ supersymmetric  Chern-Simons-Higgs theory,  and Gudnason model \cite{Gu1, Gu2} of $\mathcal{N}=2$ supersymmetric  Yang-Mills-Chern-Simons-Higgs theory and so on. The relative Euler--Lagrange equations of those models often provided many mathematical challenging problems. We refer the readers to \cite{Dunne1,yangbook}
for exhaustive bibliography.\par

Speilman et al.\cite{SFEG} observed no parity breaking in the experiment with high temperature superconductivity. Hagen\cite{Hagen} and  Wilczek\cite{Wil}  indicated the parity broken  may not  happen in the a field theory with even number of Chern-Simons gauge fields.  One of the simplest models of this kind is the $[U(1)]^2$ Chern-Simons model of two Higgs fields, where each of them coupled to one of two Chern--Simons fields. In this paper, we will study  the relativistic self-dual  $[U(1)]^2$  Chern-Simons model proposed by Kim et al\cite{KLKLM}. For simplicity, we  consider the case with only mutual Chern--Simons interaction. We give only a brief description on this model. Let   $(A_\mu^{(i)})$ $(\mu=0,1, 2,\,i=1,2)$ be two Abelian gauge fields and $\phi_i$ $(i=1,2)$  be two Higgs scalar fields, where the electromagnetic fields and covariant derivatives are defined by
\begin{equation}
   F^{(i)}_{\mu\nu}=\partial_\mu A_\nu^{(i)}-\partial_\nu A_\mu^{(i)},\quad D_\mu\phi_i=\partial_\mu\phi_i-\mathrm{i} A^{(i)}_\mu\phi_i, \quad \mu=0, 1,2,\,\,i=1,2. \label{a2}
\end{equation}
The Lagrangian of this model  is  written in the form
\begin{equation}
 \mathcal{L}  =  -\frac{\varepsilon}{ 2}\epsilon^{\mu\nu\alpha}\left(A^{(1)}_\mu F^{(2)}_{\mu\nu}+A^{(2)}_\mu F^{(1)}_{\mu\nu}\right)+\sum\limits_{i=1}^2D_\mu\phi_i\overline{D^\mu\phi_i}-V(\phi_1, \phi_2), \label{a1}
\end{equation}
where $\varepsilon>0$ is a coupling parameter,
and the Higgs potential $ V(\phi_1,\phi_2) $ is  taken as
 \begin{equation}
  V(\phi_1,\phi_2) =    \frac{1}{4\varepsilon^2}\left(|\phi_2|^2\left[|\phi_1|^2-1\right]^2+|\phi_1|^2\left[|\phi_2|^2-1\right]^2\right).
\end{equation}
 After a BPS reduction \cite{Bo,PS}, one can show that the energy minimizer satisfies the following self-dual equation:
  \begin{equation}\label{er3}\left\{
\begin{split}
 &D_1\phi_k\pm\mathrm{i} D_2\phi_k=0, \quad k=1, 2 , \\
 & F^{(1)}_{12} \pm \frac{1}{2\varepsilon^2}|\phi_2|^2\left(|\phi_1|^2-1\right)=0,\\
  & F^{(2)}_{12} \pm \frac{1}{2\varepsilon^2}|\phi_1|^2\left(|\phi_2|^2-1\right)=0.
   \end{split}\right.
\end{equation}

As in \cite{JT}, we let  $u_{\varepsilon, i}=\ln |\phi_i|^2$, and denote the zeros of $\phi_i$ by $\{p_{1,i}, \dots, p_{{N_i},i}\}$,
 $i=1, 2$. Then $(u_{\varepsilon, 1},u_{\varepsilon,2})$ satisfies
\begin{equation}
\begin{aligned}\label{maineq}
\left \{
\begin{array}{ll}
\Delta u_{\varepsilon,1}+\frac{1}{\varepsilon^2} e^{u_{\varepsilon,2}}(1-e^{u_{\varepsilon,1}})=4\pi \sum^{N_1}_{j=1} \delta_{p_{j,1}}\quad\mbox{on }~ \Omega,
\\
\\
\Delta u_{\varepsilon,2}+\frac{1}{\varepsilon^2} e^{u_{\varepsilon,1}}(1-e^{u_{\varepsilon,2}})=4\pi \sum^{N_2}_{j=1} \delta_{p_{j,2}}\quad\mbox{on }~ \Omega,
\end{array}\right.
\end{aligned}
\end{equation}
where $\delta_p$ is  the Dirac measure   at $p$.  See \cite{KLKLM,dz94,LPY} for the details of the derivation of \eqref{maineq} from \eqref{er3}. $\Omega$ here is usually refereed to  $\mathbb{R}^2$ or a flat tours $\mathbb{T}^2$.  \par

When $u_{\varepsilon,1}=u_{\varepsilon,2}=u_{\varepsilon}$ and  $\sum^{N_1}_{j=1}\delta_{p_{j,1}}=\sum^{N_2}_{j=1}\delta_{p_{j,2}}=\sum^{N}_{j=1}\delta_{p_{j }}$, then
\eqref{maineq} is reduced to
\begin{equation}\label{reeq}
\Delta u_{\varepsilon}+\frac{1}{\varepsilon^2}e^{u_{\varepsilon}}(1-e^{u_{\varepsilon}})=4\pi \sum^{N }_{j=1} \delta_{p_{j }},
\end{equation}
which is the equation derived from the Abelian Chern-Simons model with one Higgs particles. See \cite{JW} for the physical background. Compared to \eqref{maineq}, the equation \eqref{reeq} has been studied extensively in the last two decades. We refer
\cite{CY1,CFL,C,choe,CKL,Dunne1,SY,T1,T} and reference therein for more details.\par

On the other hand, the system \eqref{maineq} is  a typical {\it skew-symmetric system}.  We introduce the background functions on $\mathbb{T}^2$ to remove the singularities.
 \begin{equation}
  u_{0,i}=-4\pi \sum_{j=1}^{N_i}G(x, p_{j,i}), \quad i=1, 2,
\end{equation}
 where   $G(x, q)$ is  the Green function defined by
\begin{equation}\label{greeneq}\left\{
\begin{split}
&-\Delta G(x, q)=\delta_q-\frac{1}{|\mathbb{T}^2|},\\
&\int_{\mathbb{T}^2} G(x,q)dx=0,
\end{split}\right.
\end{equation}
and  $|\mathbb{T}^2| $ is the area of $\mathbb{T}^2$.  With the transform $u_{\varepsilon, i}\to u_{0,i}+u_{\varepsilon, i}, i=1,2$, the system \eqref{maineq} can be reduced into
    \begin{equation}\label{nosingeq}\left\{
\begin{split}
\Delta u_{\varepsilon,1}+\frac{1}{\varepsilon^2}e^{u_{0,2}+u_{\varepsilon, 2}}(1-e^{u_{0,1}+u_{\varepsilon, 1}})= \frac{ 4N_1\pi}{|\mathbb{T}^2|}\\
\Delta u_{\varepsilon, 2}+\frac{1}{\varepsilon^2}e^{u_{0,1}+u_{\varepsilon, 1}}(1-e^{u_{0,2}+u_{\varepsilon, 2}})= \frac{4N_2\pi}{|\mathbb{T}^2|}
\end{split} \quad \quad\quad\text{   in  }\,\,\mathbb{T}^2.\right.
\end{equation}
Then any solution of the system \eqref{nosingeq} is   a critical point of the following functional
 \begin{equation}\label{functional}
\begin{aligned}
 &  I(u_{\varepsilon, 1}, u_{\varepsilon, 2})\\
 & =\int_{\mathbb{T}^2} \Big\{\frac12 \nabla u_{\varepsilon, 1}\cdot \nabla u_{\varepsilon, 2}+\frac{1}{\varepsilon^2} (1-e^{u_{0,1}+u_{\varepsilon, 1}})(1-e^{u_{0,2}+u_{\varepsilon, 2}})
 \\&\quad+\frac{4\pi}{|\mathbb{T}^2|}(N_1u_{\varepsilon, 1}+N_2u_{\varepsilon, 2})\Big\}  dx
  \end{aligned}
 \end{equation}
We refer the readers to \cite{Yan1,Yan2,Yan3} for more information about  skew-symmetric systems. Since  the  action functional  \eqref{functional} is indefinite, there are  difficulties of  studying \eqref{maineq} from the direct variational method. \par

From the potential energy density, it can be seen that the finite energy condition impose the following behaviors of
$(u_{\varepsilon, 1},u_{\varepsilon,2})$:

\begin{itemize}
\item[{\bf a.}] $\Omega=\mathbb{R}^2$

 \subitem (1)  $(u_{\varepsilon, 1},u_{\varepsilon,2})\to (0,0)$ \quad\quad  \;  as   $|x|\to \infty$.

 \subitem (2)  $(u_{\varepsilon, 1},u_{\varepsilon,2})\to (-\infty,-\infty)$ as $|x|\to \infty$.
\item[{\bf b.}] $\Omega=$ flat torus  $\mathbb{T}^2$
 \subitem (1)  $(u_{\varepsilon, 1},u_{\varepsilon,2})\to (0,0)$ \quad\quad  \;  \;  a.e.  as $\varepsilon\to 0$.
 \subitem (2)  $(u_{\varepsilon, 1},u_{\varepsilon,2})\to (-\infty,-\infty)$\quad  a.e. as $\varepsilon\to 0$.
\end{itemize}
In the physical literature, a solution to \eqref{maineq} satisfies ${\bf a.}(1)$ or ${\bf b.}(1)$ is  called a {\it topological solution} and satisfies ${\bf a.}(2)$ or ${\bf b.}(2)$ is called a {\it non-topological solution}. \par

Lin, Ponce and Yang \cite{LPY} initiated the mathematical study on this system, where they established the existence of topological solution in $\mathbb{R}^2$. Since the main difficulty arises from the skew-symmetric structure of the system \eqref{maineq}, they used the constrained minimization method with a deep application of Moser-Trudinger inequality. Since then, this system \eqref{maineq} has been studied from other aspects, such as the existence of topological  solutions over  a flat torus \cite{LP},  the existence of  non-toplogical solutions over the  plane  and a flat torus \cite{HHL1} and the structure of the radial solutions  over the plane \cite{CCL}.\par

In \cite{LP}, Lin and  Prajapat applied a monotone scheme and the constrained minimization method  to obtain two kind of solutions to \eqref{maineq} over a flat torus: maximal solution and mountain-pass solution. Here,   $(u_{\varepsilon,1}, u_{\varepsilon,2})$ is called a maximal solution to \eqref{maineq} if
$$u_i<u_{\varepsilon,i},\quad i=1,2,$$
for other solutions $(u_1,u_2)$ to \eqref{maineq}. Furthermore, they showed that the maximal solution is unique when $\varepsilon>0$ is small.   It is obvious that  the  maximal solution is a topological solution.  Naturally, we are  lead to the question {\it whether the topological solution is unique.} We give a positive answer to this question when $\varepsilon>0$ is small.

\begin{theorem}\label{uniqueness} Consider $\Omega=\mathbb{T}^2$. There exists $\varepsilon_0:=\varepsilon_0(p_{j,i})>0$ such that
 there exists a unique topological solution of \eqref{maineq} for each $\varepsilon\in(0,\varepsilon_0)$. Moreover, any topological solution is a unique maximal solution of \eqref{maineq} for  $\varepsilon\in(0,\varepsilon_0)$.
\end{theorem}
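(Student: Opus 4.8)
The plan is to argue by contradiction: suppose along a sequence $\varepsilon_n\to 0$ there exist two distinct topological solutions $(u^{(n)}_{1},u^{(n)}_{2})$ and $(v^{(n)}_{1},v^{(n)}_{2})$ of \eqref{maineq} with the same vortex data. Since both are topological, both converge to $(0,0)$ away from the vortices, and a bubbling analysis describes the local profiles near each $p_{j,i}$: after the usual rescaling one obtains, in the blow-up limit, an entire (or planar) solution of the limiting Chern--Simons equation, which by the classification results for the scalar equation \eqref{reeq} is nondegenerate. The first block of work is to make this bubbling picture quantitative: establish uniform-in-$\varepsilon$ estimates on $u^{(n)}_i - u^{(n)}_{0,i}$ away from the vortex set, the exact rate at which $1-e^{u_{0,i}+u^{(n)}_i}\to 0$ (exponentially small in $1/\varepsilon^2$ in the bulk, polynomially near a vortex), and the precise form of the concentration near each $p_{j,i}$. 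This is where the assumption that the solutions are topological and that $\varepsilon$ is small enters decisively, and it relies on the maximum principle / monotone-iteration structure exploited in \cite{LP}.

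Next I would set $w^{(n)}_i := u^{(n)}_{i} - v^{(n)}_{i}$ and subtract the two systems. Writing $e^{u^{(n)}_i}-e^{v^{(n)}_i} = \big(\int_0^1 e^{v^{(n)}_i + t w^{(n)}_i}\,dt\big) w^{(n)}_i =: c^{(n)}_i\, w^{(n)}_i$, the difference $(w^{(n)}_1,w^{(n)}_2)$ solves a \emph{linear} skew-symmetric system
\begin{equation*}
\left\{
\begin{aligned}
&\Delta w^{(n)}_1 + \tfrac{1}{\varepsilon_n^2}\,e^{u^{(n)}_2}\big(1-e^{u^{(n)}_1}\big)\text{-type terms} \;=\;0,\\
&\Delta w^{(n)}_2 + (\text{symmetric})\;=\;0,
\end{aligned}\right.
\end{equation*}
more precisely a system whose coefficients are built from $c^{(n)}_i/\varepsilon_n^2$ and $e^{u^{(n)}_i}/\varepsilon_n^2$ and which has no singular (Dirac) part because the singularities cancel. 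Normalizing $\|(w^{(n)}_1,w^{(n)}_2)\|_{L^\infty}=1$, the goal is to show this forces $w^{(n)}_i\to 0$, a contradiction. I would split the analysis into the bulk region, where the potential coefficients are exponentially large and positive so an $L^\infty$/maximum-principle or energy argument kills $w^{(n)}_i$, and the finitely many vortex neighborhoods, where after rescaling $(w^{(n)}_1,w^{(n)}_2)$ converges to a bounded solution of the \emph{linearized} limiting equation; by the nondegeneracy of the topological entire solution (quoted from the bubbling/nondegeneracy results the paper invokes), that limit must vanish, so the normalized mass cannot concentrate there either. A Pohozaev-type identity — flagged in the keywords — is the natural tool to exclude a residual "slowly decaying" mode and to control the interaction between the bulk and the bubbles.

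The main obstacle I expect is precisely the interplay of the skew-symmetric (indefinite) structure with the blow-up: because the quadratic form in \eqref{functional} is $\nabla u_1\cdot\nabla u_2$ rather than a definite form, one cannot run a straightforward energy argument on $(w_1,w_2)$, and the maximum principle applies to the cooperative/competitive pieces only after a careful sign bookkeeping of the coefficients $e^{u^{(n)}_i}(1-e^{u^{(n)}_j})$. Getting the uniform nondegeneracy of the linearized operator — with constants independent of $\varepsilon_n$ and uniform over the bubble locations — and patching the local (bubble-scale) and global (bulk-scale) estimates into a single contradiction is the technical heart of the argument. Once the contradiction is reached, uniqueness for small $\varepsilon$ follows, and since \cite{LP} produces a maximal solution which is automatically topological, the unique topological solution must coincide with it, giving the last assertion of Theorem~\ref{uniqueness}.
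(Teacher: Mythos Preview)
Your proposal is correct in spirit and matches the paper's overall strategy: contradiction, $L^\infty$-normalization of the difference, blow-up, and nondegeneracy of the linearized limit. Two points of comparison are worth recording. First, the paper does not split into a ``bulk region'' versus ``vortex neighborhoods'' and then patch; instead it simply tracks the maximum point $x_\varepsilon$ of $|A_\varepsilon|$, lets $x_\varepsilon\to p$ by compactness of $\mathbb{T}^2$, and distinguishes only whether $|x_\varepsilon-p|/\varepsilon$ stays bounded (rescale about $p$, limit has Dirac masses $\nu_i$) or diverges (rescale about $x_\varepsilon$, limit has $\nu_i=0$). This sidesteps entirely the ``patching'' and the sign/maximum-principle worries you raise about the indefinite structure: no energy argument is ever needed, only elliptic compactness and the pointwise identity $|A_\varepsilon(x_\varepsilon)|=1$. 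Second, the nondegeneracy input is for the \emph{system} \eqref{radialeq}---this is Theorem~B, taken from Chern--Chen--Lin \cite{CCL}---not the scalar equation \eqref{reeq}; and the Pohozaev identity is used only in the preliminaries (Lemma~2.2) to pin down the mass of $(1-e^{u_1})(1-e^{u_2})$ near each vortex, which in turn feeds into the uniform $L^\infty$ convergence of the rescaled solutions to the entire profile (Lemma~2.4), rather than to rule out any residual mode in the linearized problem.
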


It is worth to note that when  $u_{\varepsilon,1}=u_{\varepsilon,2}=u_{\varepsilon}$ and  $\sum^{N_1}_{j=1} \delta_{p_{j,1}}=\sum^{N_2}_{j=1} \delta_{p_{j,2}}=\sum^{N}_{j=1} \delta_{p_{j }}$, our theorem  is reduced to the uniqueness  theorem for the topological solutions  to the scalar equation \eqref{reeq} on $\mathbb{T}^2$  proved by Choe \cite{C}  (also on $\mathbb{R}^2$) and Tarantello \cite{T} independently. Choe \cite{C} showed that the topological solution can be approximated by the sum of rescaled radial topological solution and  used the invertibility of the linearized operator   from $W^{2,2}$ to $L^2$ to prove the uniqueness of the topological solutions.  Tarantello \cite{T} showed that the topological solutions to \eqref{reeq} is a strict local minimum for the corresponding action functional and the uniqueness follows.  On the other hand, since our problem has indefinite functional, it is difficult to use the concept of stability (local minimizer). So  we use different approach such that we observe the behavior of the direct difference of two topological solutions with $L^\infty$-normalization instead of $W^{2,2}$ or $L^2$. In our proof of Theorem \ref{uniqueness}, to prove  the uniqueness of topological solution for small $\varepsilon$ on $\mathbb{T}^2$, we investigate  the behavior of topological solutions as $\varepsilon\to0$ for \eqref{maineq} in Section 2 as a generalization of the estimates obtained in \cite{C,T}.  In fact,   the similar arguments on $\mathbb{T}^2$ in Section 2 also hold for the topological  entire solutions on $\mathbb{R}^2$ due to the fact that the topological entire solutions achieve the boundary condition exponentially fast at infinity. More precisely,  Lin Ponce and Yang proved the following theorem.\\
\\
{\bf Theorem A.} \cite{LPY} {\it Suppose  $(u_{\varepsilon,1},u_{\varepsilon,2})$ is a topological    solution of \eqref{maineq}  in $\mathbb{R}^2$. Then
\begin{equation}
\sum_{i=1}^2  ( |u_{\varepsilon,i}(x)| + |\nabla u_{\varepsilon,i}(x)| )\leq C\frac{e^{-\frac{|x|}{\varepsilon}}}{|x|^{1/2}}\end{equation}
for some constant $C$ and  $|x|$ sufficiently large. }

\bigskip
In view of the above good exponential decay property of the topological entire solutions,   we obtain the following theorem.
\begin{theorem}\label{entire} Consider $\Omega=\mathbb{R}^2$. There exists $\varepsilon_0:=\varepsilon_0(p_{j,i})>0$ such that
 there exists a unique topological entire solution of \eqref{maineq} for each $\varepsilon\in(0,\varepsilon_0)$.
\end{theorem}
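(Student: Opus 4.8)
The plan is to mirror the proof of Theorem~\ref{uniqueness}, replacing $\mathbb{T}^2$ by $\mathbb{R}^2$ and using Theorem~A to supply the compactness that the torus provided automatically. First I would record that, by Theorem~A, every topological entire solution $(u_{\varepsilon,1},u_{\varepsilon,2})$ of \eqref{maineq} on $\mathbb{R}^2$, together with its gradient, decays like $e^{-|x|/\varepsilon}|x|^{-1/2}$; in particular $u_{\varepsilon,i}\in L^\infty(\mathbb{R}^2)$, $u_{\varepsilon,i}<0$, and $u_{\varepsilon,i}$ with all its derivatives is integrable near infinity. This decay is uniform enough that the $\varepsilon\to0$ analysis of Section~2 goes through verbatim on $\mathbb{R}^2$: one still has $u_{\varepsilon,i}\to0$ in $C^2_{\mathrm{loc}}(\mathbb{R}^2\setminus\{p_{j,i}\})$, and after the rescaling $x\mapsto p_{j,i}+\varepsilon y$ around each vortex point the solution converges to the unique radial topological solution of the corresponding limiting Chern--Simons equation on $\mathbb{R}^2$ --- the scalar equation \eqref{reeq} when $p_{j,i}$ is a vortex of only one field, and the full system \eqref{maineq} with the vortex at the origin when it is shared. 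The only role of the torus in Section~2 was to control the solution at ``infinity'', and here Theorem~A does that job: the Pohozaev-type identities are applied on large disks, the boundary terms tending to $0$ by the exponential decay.

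Now suppose, toward a contradiction, that for some $\varepsilon_n\downarrow0$ there are two distinct topological entire solutions $(u^{(1)}_{n,i})$, $(u^{(2)}_{n,i})$. Put $w_{n,i}=u^{(1)}_{n,i}-u^{(2)}_{n,i}$ and $\Lambda_n=\max_{i=1,2}\|w_{n,i}\|_{L^\infty(\mathbb{R}^2)}$, which is finite and positive by Theorem~A, and set $\eta_{n,i}=w_{n,i}/\Lambda_n$, so $\max_i\|\eta_{n,i}\|_{L^\infty}=1$. By the mean value theorem $(\eta_{n,1},\eta_{n,2})$ solves a linear system
\begin{equation*}
\Delta\eta_{n,1}+\frac1{\varepsilon_n^2}\bigl(a_{n,1}\eta_{n,1}+b_{n,1}\eta_{n,2}\bigr)=0,\qquad
\Delta\eta_{n,2}+\frac1{\varepsilon_n^2}\bigl(a_{n,2}\eta_{n,2}+b_{n,2}\eta_{n,1}\bigr)=0 \quad\text{in }\mathbb{R}^2,
\end{equation*}
with $a_{n,i},b_{n,i}$ expressed through $e^{u^{(1)}_{n,i}},e^{u^{(2)}_{n,i}}$; by the first step, away from the vortices $a_{n,i}\to-1$ and $b_{n,i}\to0$, while in the $\varepsilon_n$-rescaled variable near a vortex they converge to the coefficients of the linearization of the corresponding limiting equation at its topological solution.

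It remains to run the limiting argument for $\eta_{n,i}$. Let $x_n$ attain $|\eta_{n,i_n}(x_n)|=1$. Because $a_{n,i}\to-1<0$ and $b_{n,i}\to0$ off the vortices, the system there is a singular perturbation, and a barrier/maximum-principle comparison --- with the far field handled by Theorem~A --- shows that $\|\eta_{n,i}\|_{L^\infty(\mathbb{R}^2\setminus\bigcup_j B_\rho(p_{j,i}))}\to0$ for every fixed $\rho>0$; hence $x_n\to p_{j_0,i_0}$ with $|x_n-p_{j_0,i_0}|=O(\varepsilon_n)$. Rescaling $\eta_{n,i}(p_{j_0,i_0}+\varepsilon_n\cdot)$, elliptic estimates together with the exponential decay yield, along a subsequence, a bounded limit $(\eta_1,\eta_2)$ on $\mathbb{R}^2$ that is not identically zero --- it realizes $|\eta_{i_0}|=1$ at $\lim(x_n-p_{j_0,i_0})/\varepsilon_n$ --- and solves the linearized limiting equation. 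The non-degeneracy of that operator --- classical for the scalar topological solution of \eqref{reeq} by Choe and Tarantello, and obtained in Section~2 for the limiting system --- forces $(\eta_1,\eta_2)\equiv(0,0)$, a contradiction, and the existence of $\varepsilon_0(p_{j,i})$ follows.

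The principal obstacle is the non-compactness of $\mathbb{R}^2$: one must prevent the normalized difference from concentrating at infinity, and, equivalently, establish its exponential smallness away from the vortices. This is precisely where Theorem~A is indispensable --- it makes $\Lambda_n$ finite, supplies the decay that closes the maximum-principle estimate on the unbounded domain, and renders the boundary terms in the Pohozaev identities negligible. With that input in hand, the remaining steps --- identification of the rescaled limit and the non-degeneracy conclusion --- are identical to the torus case treated for Theorem~\ref{uniqueness}.
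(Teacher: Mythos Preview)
Your overall strategy is sound and close to the paper's, but the route you take to localize the maximum point is different, and as written it has a gap.

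The paper does \emph{not} use a barrier to force $x_n$ near a vortex. Instead it simply adds one more case to the dichotomy already used on $\mathbb{T}^2$: if $x_\varepsilon\to\infty$, one rescales around $x_\varepsilon$ itself rather than around a vortex. By Theorem~A the rescaled solutions $u_{\varepsilon,i}(\varepsilon y+x_\varepsilon)$ tend to $0$ in $C^2_{\mathrm{loc}}$, so the rescaled pair $(A_\varepsilon,B_\varepsilon)(\varepsilon y+x_\varepsilon)$ converges to a bounded solution of $\Delta\tilde A-\tilde A=0$, $\Delta\tilde B-\tilde B=0$ on $\mathbb{R}^2$, which is identically zero---contradicting $|\tilde A(0)|=1$. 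When $x_\varepsilon$ stays bounded, one is back to Cases~1 and~2 of the proof of Theorem~\ref{uniqueness} verbatim.

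Your barrier argument, as stated, controls $\|\eta_{n,i}\|_{L^\infty}$ only outside \emph{fixed} balls $B_\rho(p_{j,i})$, which yields $x_n\to p_{j_0,i_0}$ but \emph{not} $|x_n-p_{j_0,i_0}|=O(\varepsilon_n)$. The intermediate regime $\varepsilon_n\ll|x_n-p_{j_0,i_0}|\to0$ is not excluded, and there $(x_n-p_{j_0,i_0})/\varepsilon_n\to\infty$, so your rescaling around the vortex fails to capture the maximum point and your nontriviality claim for the limit $(\eta_1,\eta_2)$ collapses. This is precisely Case~2 in the paper, which you have tried to bypass. The gap can be closed---either by pushing the barrier down to balls of radius $R\varepsilon_n$ using the uniform estimate of Lemma~\ref{linfty} together with the decay of the entire topological profile at infinity, or, more simply, by rescaling around $x_n$ as the paper does---but that step is missing from your outline.

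Two minor points. First, the non-degeneracy of the linearized limiting system is Theorem~B (Chern--Chen--Lin), not something established in Section~2. Second, when the limit point $p$ is a vortex of only one field (say $\nu_2=0$), the limiting problem is still the coupled system~\eqref{radialeq} with $\nu_2=0$, whose topological solution has $u_2\equiv 0$ and $u_1$ solving $\Delta u_1+(1-e^{u_1})=4\pi\nu_1\delta_0$; this is not the scalar Chern--Simons equation~\eqref{reeq}.
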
\par

Firstly, we sketch our proof for Theorem \ref{uniqueness} here. Suppose, for the sake of contradiction, that  there exist two sequences of distinct topological  solutions $(u_{\varepsilon,1},u_{\varepsilon,2})$ and $(\tilde{u}_{\varepsilon,1},\tilde{u}_{\varepsilon,2})$ of  \eqref{maineq}.
 Without loss of generality, we may assume that there exists $x_\varepsilon\in\mathbb{T}^2$ such that
 \begin{equation*}
 |u_{\varepsilon,1}(x_\varepsilon)-\tilde{u}_{\varepsilon,1}(x_\varepsilon)|=\|u_{\varepsilon,1}-\tilde{u}_{\varepsilon,1}\|_{L^\infty(\mathbb{T}^2)}\ge\|u_{\varepsilon,2}-\tilde{u}_{\varepsilon,2}\|_{L^\infty(\mathbb{T}^2)}
 \end{equation*}
 and  $x_\varepsilon\to p$ as $\varepsilon\to 0$(up to subsequence).
 Set
\begin{equation} \label{AB}
  A_\varepsilon\equiv\frac{u_{\varepsilon,1}-\tilde{u}_{\varepsilon,1}}{\|u_{\varepsilon,1}-\tilde{u}_{\varepsilon,1}\|_{L^\infty(\mathbb{T}^2)}} \quad  \text{and} \quad B_\varepsilon\equiv\frac{u_{\varepsilon,2}-\tilde{u}_{\varepsilon,2}}{\|u_{\varepsilon,1}-\tilde{u}_{\varepsilon,1}\|_{L^\infty(\mathbb{T}^2)}} .
 \end{equation}
 Then $(A_\varepsilon,B_\varepsilon)$ satisfies
\begin{equation}\label{linear-sym-1}
\begin{aligned}
\left \{
\begin{array}{ll}
\Delta A_\varepsilon-\frac{1}{\varepsilon^2} e^{\tilde{u}_{\varepsilon,2}+\eta_{\varepsilon,1}}A_\varepsilon+\frac{1}{\varepsilon^2} e^{\eta_{\varepsilon,2}}(1-e^{u_{\varepsilon,1}})B_\varepsilon=0\quad\mbox{on }~ \mathbb{T}^2,
\\
\\
\Delta B_\varepsilon-\frac{1}{\varepsilon^2} e^{\tilde u_{\varepsilon,1}+\eta_{\varepsilon,2}}B_\varepsilon+\frac{1}{\varepsilon^2} e^{\eta_{\varepsilon,1}}(1-e^{u_{\varepsilon,2}})A_\varepsilon=0\quad\mbox{on }~ \mathbb{T}^2,
\end{array}\right.
\end{aligned}
\end{equation}
where  $\eta_{\varepsilon,i}$ is between $u_{\varepsilon,i}$ and $\tilde u_{\varepsilon,i}$, $i=1,2.$
 After suitable rescaling at the maximum points (see Section 3), \eqref{linear-sym-1} converges to a bounded solution $(A,B)$  of
 \begin{equation}\label{linear-sym-2}
\left\{
\begin{array}{l}
\Delta A - e^{U_1+U_2} A+ e^{U_2}(1-e^{U_1})B  =0 \\
\Delta B -  e^{U_1+U_2}B +  e^{U_1}(1-e^{U_2})A  =0
\end{array}\right.
\mbox{   in   }\mathbb{R}^2.
\end{equation}
where $(U_1,U_2)$ is a topological   solution to
\begin{equation}\label{radialeq}
\left\{
\begin{array}{l}
\Delta u_1 +e^{u_2}(1-e^{u_1}) =4\pi \nu_1 \delta_0 \\
\Delta u_2  +e^{u_1}(1-e^{u_2})  = 4\pi \nu_2\delta_0
\end{array}\right.
\mbox{   in   }\mathbb{R}^2.
\end{equation}
 and $\nu_1$ and $\nu_2$ are constants which are determined by  the choice of the rescaling region.
By the standard method of moving plane\cite{BS}, one can show that  the topological solution $(u_1,u_2)$ of \eqref{radialeq} is radially symmetric with respect to the origin.  For any topological solution of \eqref{radialeq},
 Chern, Chen and Lin\cite{CCL} showed the  non-degeneracy of the linearized system \eqref{linear-sym-2}, i.e., $A=B=0$.\\
\\
{\bf Theorem B. }\cite{CCL} {\it  Let $(U_1, U_2)$ be the radial topological  solution of   \eqref{radialeq}. Then the linearized
equation   \eqref{linear-sym-2} of  \eqref{radialeq} at $(U_1, U_2)$  is non-degenerate, i.e.,
if $(A,B)$ is a pair of bounded solution of  \eqref{linear-sym-2}, then $$(A,B)\equiv (0,0).$$ Moreover, equation     \eqref{radialeq}
possesses one and only one topological solution.}

\bigskip

Then the uniqueness of the topological solutions of \eqref{maineq} on  $\mathbb{T}^2$  follows  from Theorem B.  Obviously, the main ingredient of our approach is to show how
$(A_\varepsilon,B_\varepsilon)$ would converge to a bounded solution of \eqref{linear-sym-2}.

 The main different part between the proof of Theorem \ref{uniqueness} and
Theorem \ref{entire} is that on $\mathbb{R}^2$, the maximum point $x_\varepsilon$ of $|u_{\varepsilon,i}-\tilde{u}_{\varepsilon,i}|$ can diverge to $\infty$ unlike on $\mathbb{T}^2$. Even in this case, we can use the good convergence property of topological entire solutions to prove Theorem \ref{entire} (see the end of Section 3).


This paper is organized as follows. In Section 2, we establish some preliminary estimates for the topological solutions which are important to show that \eqref{linear-sym-1} converges to \eqref{linear-sym-2}.   Section 3 is devoted to the proof of Theorem \ref{uniqueness}-\ref{entire}.

\end{section}
\begin{section}{Preliminaries}
In this section,   we will show some preliminary estimates for the topological solutions to \eqref{maineq} in $\mathbb{T}^2$. Then in view of Theorem A, the similar arguments in this section are also true for the topological entire solutions on $\mathbb{R}^2$.
Our main goal in this section is to show the topological solutions to \eqref{maineq}, after suitable rescaling, can converge to the radially symmetric entire topological solutions  on a certain domain(see Lemma \ref{linfty} below).

By maximum principle, it is clear that satisfies $u_{\varepsilon,i}<0$  on $\mathbb{T}^2$  for $i=1,2$.
Thus, by integrating \eqref{maineq}, we have
\begin{equation}
\begin{aligned}\label{bddofintegration}
\int_{\mathbb{T}^2} \frac{1}{\varepsilon^2} e^{u_{\varepsilon,j}}(1-e^{u_{\varepsilon,i}})dx=\int_{\mathbb{T}^2} \frac{1}{\varepsilon^2} |e^{u_{\varepsilon,j}}(1-e^{u_{\varepsilon,i}})|dx=4\pi N_i,\quad  1\le j\neq i\le2.
\end{aligned}
\end{equation} \par

We show that, there are only two types of solutions, topological and non-topological solutions, as $\varepsilon\to 0$. In particular, if $(u_{\varepsilon,1},u_{\varepsilon,2})$ is a topological solution, then $u_{\varepsilon,i}\to 0$($i=1,2$) in $L^{p}(\mathbb{T}^2)$ for some $p>1$ (In fact, we can improve the convergence result for topological solutions in Lemma \ref{speedofconvergence}).

\begin{lemma}\label{Lp}
Let $(u_{\varepsilon,1},u_{\varepsilon,2})$ be a sequence of solutions of (\ref{maineq}).  Then, up to  subsequence,
one of the following holds true:

(i)  for $i=1,2$, $u_{\varepsilon,i}\to-\infty$ a.e. as $\varepsilon\to0$;

(ii) for $i=1,2$, $u_{\varepsilon,i}\to0$ a.e. as $\varepsilon\to0$. Moreover, $u_{\varepsilon,i}\to0$  in $L^p(\mathbb{T}^2)$ for some $p>1$, $i=1,2$.
\end{lemma}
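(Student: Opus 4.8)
The plan is to exploit the uniform $L^1$-bound \eqref{bddofintegration} together with the sign condition $u_{\varepsilon,i}<0$. First I would extract information from \eqref{bddofintegration}: since $\int_{\mathbb{T}^2} \frac{1}{\varepsilon^2} e^{u_{\varepsilon,j}}(1-e^{u_{\varepsilon,i}})\,dx = 4\pi N_i$, the function $f_{\varepsilon,i} := \frac{1}{\varepsilon^2} e^{u_{\varepsilon,j}}(1-e^{u_{\varepsilon,i}})$ is bounded in $L^1(\mathbb{T}^2)$, and the same is true for the right-hand sides of the equations in \eqref{nosingeq} after removing the singular part. By standard elliptic $L^1$-theory (Brezis--Merle / Stampacchia estimates for $-\Delta$ on the torus with zero mean), writing $u_{\varepsilon,i} = \bar u_{\varepsilon,i} + v_{\varepsilon,i}$ with $\bar u_{\varepsilon,i}$ the average and $\int v_{\varepsilon,i}=0$, one gets $\|v_{\varepsilon,i}\|_{L^q(\mathbb{T}^2)} \le C_q$ for every $q<\infty$ (indeed $\|v_{\varepsilon,i}\|_{W^{1,q}} \le C_q$ for $q<2$), uniformly in $\varepsilon$. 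Thus the oscillation of $u_{\varepsilon,i}$ around its mean is controlled in $L^q$, and the whole dichotomy reduces to the behavior of the sequence of real numbers $\bar u_{\varepsilon,i}$.

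Next I would analyze $\bar u_{\varepsilon,i}$. Passing to a subsequence, each $\bar u_{\varepsilon,i}$ either stays bounded or tends to $-\infty$ (it cannot tend to $+\infty$ since $u_{\varepsilon,i}<0$). The key coupling comes again from \eqref{bddofintegration}: if, say, $\bar u_{\varepsilon,2}$ stays bounded (hence, with the $L^q$ oscillation bound, $e^{u_{\varepsilon,2}}$ is bounded below in measure), then for $\int \frac{1}{\varepsilon^2} e^{u_{\varepsilon,2}}(1-e^{u_{\varepsilon,1}})\,dx$ to remain equal to $4\pi N_1$ as $\varepsilon\to 0$, one needs $1-e^{u_{\varepsilon,1}}\to 0$ in an integrated sense, which forces $\bar u_{\varepsilon,1}\to 0$; then running the same argument with the roles exchanged forces $\bar u_{\varepsilon,2}\to 0$ as well. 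Conversely, if $\bar u_{\varepsilon,1}\to-\infty$ one shows the other mean must diverge too: otherwise we would be in the previous case and obtain a contradiction with $\bar u_{\varepsilon,1}\to -\infty$. This establishes that either both means $\to -\infty$ (case (i), which with the uniform $L^q$ oscillation bound gives $u_{\varepsilon,i}\to -\infty$ a.e.\ along a subsequence) or both means $\to 0$, giving $u_{\varepsilon,i}\to 0$ a.e.\ and, by the uniform $L^q$-oscillation bound combined with $\bar u_{\varepsilon,i}\to0$, convergence $u_{\varepsilon,i}\to 0$ in $L^p(\mathbb{T}^2)$ for any $p<\infty$, in particular for some $p>1$.

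The step I expect to be the main obstacle is making rigorous the implication ``$\bar u_{\varepsilon,j}$ bounded below $\Rightarrow$ $\bar u_{\varepsilon,i}\to 0$'': one must upgrade the weak/measure-theoretic statement ``$e^{u_{\varepsilon,j}}(1-e^{u_{\varepsilon,i}})\to 0$ in $L^1$'' (which follows from the $\varepsilon^{-2}$ prefactor and boundedness of the integral) to pointwise-a.e.\ control of $u_{\varepsilon,i}$, and then combine it with the uniform oscillation estimate to conclude $\bar u_{\varepsilon,i}\to 0$ rather than merely $\limsup\bar u_{\varepsilon,i}\le 0$. The ruling out of ``mixed'' behavior (one component bounded, the other diverging) rests entirely on the skew-symmetric coupling in \eqref{bddofintegration} and has to be handled carefully, using that $1-e^{u_{\varepsilon,i}}$ is bounded between $0$ and $1$ so that the only way the integral can be forced to $4\pi N_i$ with the $\varepsilon^{-2}$ blow-up in front is through concentration of $e^{u_{\varepsilon,j}}$ onto a small set — which, given the $L^q$-oscillation bound, cannot happen if $\bar u_{\varepsilon,j}$ is bounded. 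Once this is sorted, the rest is routine elliptic estimates.
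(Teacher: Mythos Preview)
Your plan is correct and matches the paper's proof almost exactly: the paper writes $u_{\varepsilon,i}=w_{\varepsilon,i}+u_{0,i}+d_{\varepsilon,i}$ (subtracting also the fixed background $u_{0,i}$, which is inessential), proves $\|\nabla w_{\varepsilon,i}\|_{L^q}\le C_q$ for $q\in(1,2)$ by the same duality/$L^1$-elliptic argument you outline, and then runs the dichotomy on the means $d_{\varepsilon,i}$. The obstacle you flag is handled in the paper by splitting according to whether $\limsup_{\varepsilon\to0}e^{d_{\varepsilon,i}}/\varepsilon^2$ is finite or infinite and applying Fatou's lemma in the latter case, which forces either $A_j:=\limsup e^{d_{\varepsilon,j}}=0$ (both means $\to-\infty$) or $w_i+u_{0,i}=-\ln A_i$ a.e., whence $A_i=1$ from the zero-mean normalization.
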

\begin{proof}
By \eqref{bddofintegration}, $e^{u_{\varepsilon,j}}(1-e^{u_{\varepsilon,i}})\to0$ in $L^1(\mathbb{T}^2)$ as $\varepsilon\to 0$. Hence,  it is clear that  either $u_{\varepsilon,i}\to-\infty$ a.e. or $u_{\varepsilon,i}\to 0 $ a.e. for $i=1,2$. So, we only need to show the $L^p$ estimate in (ii).\par

Let $d_{\varepsilon,i}=\frac{1}{|\mathbb{T}^2|}\int_{\mathbb{T}^2} u_{\varepsilon,i} dx$ and $u_{\varepsilon,i}=w_{\varepsilon,i}+u_{0,i} +d_{\varepsilon,i}$. Then
 $(w_{\varepsilon,1},w_{\varepsilon,2})$ satisfies
\begin{equation}
\begin{aligned}\label{wemain}
\left \{
\begin{array}{ll}
\Delta w_{\varepsilon,1}+\frac{1}{\varepsilon^2} e^{u_{\varepsilon,2}}(1-e^{u_{\varepsilon,1}})=\frac{4\pi N_1}{|\mathbb{T}^2|}\quad\mbox{on }~ \mathbb{T}^2,
\\
\\
\Delta w_{\varepsilon,2}+\frac{1}{\varepsilon^2} e^{u_{\varepsilon,1}}(1-e^{u_{\varepsilon,2}})=\frac{4\pi N_2}{|\mathbb{T}^2|}\quad\mbox{on }~ \mathbb{T}^2,
\end{array}\right.
\end{aligned}
\end{equation}
and  $\int_{\mathbb{T}^2} w_{\varepsilon,i} dx=0$, $i=1,2.$\par
We claim that  there exist $C_q>0$ such that $\|\nabla w_{\varepsilon,i}\|_{L^q(\mathbb{T}^2)}\le C_q$ for any $q\in(1,2)$.
Let $q'=\frac{q}{q-1}>2$. Then
\begin{equation}\begin{aligned}\label{normexpression}
&\|\nabla w_{\varepsilon,i}\|_{L^q(\mathbb{T}^2)}
\\&\le\sup\Big\{\Big|\int_{\mathbb{T}^2}\nabla w_{\varepsilon,i}\nabla\phi dx\Big|\  \Big|\ \ \phi\in W^{1,q'}(\mathbb{T}^2),\ \int_{\mathbb{T}^2}\phi dx=0,\ \|\phi\|_{W^{1,q'}(\mathbb{T}^2)}=1\Big\}.
\end{aligned}\end{equation}
By lemma 7.16 in \cite{GT}, if  $\int_{\mathbb{T}^2}\phi dx=0$, then there exist $c,\ C>0$ such that
\begin{equation}\label{GTineq}
|\phi(x)|\le c\int_{\mathbb{T}^2}\frac{|\nabla\phi|}{|x-y|}dy\le C\|\nabla\phi\|_{L^{q'}(\mathbb{T}^2)}\ \ \textrm{for}\ x\in\mathbb{T}^2.
\end{equation}
Thus  in view of  (\ref{wemain}),  (\ref{GTineq}), and  \eqref{bddofintegration}, we see that there exists constant $C>0$, independent of $\phi$ satisfying $\int_{\mathbb{T}^2}\phi dx=0,\ \|\phi\|_{W^{1,q'}(\mathbb{T}^2)}=1$,
\begin{equation}
\begin{aligned}
\Big|\int_{\mathbb{T}^2}\nabla w_{\varepsilon,i}\nabla\phi dx\Big|&=\Big|\int_{\mathbb{T}^2}\Delta w_{\varepsilon,i}\phi dx\Big|
\\&\le\|\phi\|_{L^\infty(\mathbb{T}^2)}\Big|\int_{\mathbb{T}^2} |\frac{1}{\varepsilon^2} e^{u_{\varepsilon,j}}(e^{u_{\varepsilon,i}}-1)|dx+4\pi N_i\Big|\le C.
\end{aligned}
\end{equation}
Now using (\ref{normexpression}), we complete the proof of our claim.\par
In view of  Poincar\'{e} inequality, we also have $\|w_{\varepsilon,i}\|_{L^q(\mathbb{T}^2)}\le c\|\nabla w_{\varepsilon,i}\|_{L^q(\mathbb{T}^2)}$. Then there exist  $w_i\in W^{1,q}(\mathbb{T}^2)$ and $p>1$ such that, as $\varepsilon\to0$,
\begin{equation}\label{wecone}
w_{\varepsilon,i}\rightharpoonup w_i\ \ \textrm{weakly in}\  W^{1,q}(\mathbb{T}^2),\ w_{\varepsilon,i}\to w_i\ \ \textrm{strongly in}\  L^p(\mathbb{T}^2),\  w_{\varepsilon,i}\to w_i\  \textrm{a.e.}.
\end{equation}
We consider the following possible cases.
\begin{itemize}
\item[(i)] $\limsup_{\varepsilon\to0}\frac{e^{d_{\varepsilon,i}}}{\varepsilon^2}\leq c$ for some constant $c>0$.
\item[(ii)] $\limsup_{\varepsilon\to0}\frac{e^{d_{\varepsilon,i}}}{\varepsilon^2}=+\infty$.
\end{itemize}

If $\limsup_{\varepsilon\to0}\frac{e^{d_{\varepsilon,i}}}{\varepsilon^2} $ is bounded, then
\begin{equation*}
e^{u_{\varepsilon,i}}=e^{w_{\varepsilon,i}+d_{\varepsilon,i}+u_{0,i}}\le c\varepsilon^2e^{w_{\varepsilon,i}+u_{0,i}}\to0\ \ \mbox{ a.e. as }\ \ \varepsilon\to0,
\end{equation*} which implies that
$u_{\varepsilon,i}\to-\infty$ a.e. as $\varepsilon\to0$.

Next, we consider the case
\begin{equation}\label{ass}
\limsup_{\varepsilon\to0}\frac{e^{d_{\varepsilon,i}}}{\varepsilon^2}=+\infty.
\end{equation}
Since $u_{\varepsilon,i}<0$ on $\mathbb{T}^2$, we see that $0\le e^{d_{\varepsilon,i}}\le1$ which implies there exists $A_i\ge0$ such that  $\limsup_{\varepsilon\to0}e^{d_{\varepsilon,i}}=A_i$.
By using Fatou's lemma and  (\ref{wecone}),  we see that
\begin{equation*}
\begin{aligned}
4\pi N_i\varepsilon^2&=
\int_{\mathbb{T}^2}e^{u_{\varepsilon,j}}(1-e^{u_{\varepsilon,i}})dx
\ge\int_{\mathbb{T}^2} A_je^{w_j+u_{0,j}}(1- A_ie^{w_i+u_{0,i}})dx,
\end{aligned}
\end{equation*}
which implies that $A_j\equiv0$ or $w_i+u_{0,i}=-\ln A_i$ a.e. in $\mathbb{T}^2$.\par

Let $1\leq i\neq j\leq 2$. If $A_j\equiv0$, then   $A_i\equiv0$ by the same argument.  Thus,  $\lim_{\varepsilon\to0}d_{\varepsilon,i}=-\infty$.  Moreover, by using \eqref{wecone}, we get that $u_{\varepsilon,i}=w_{\varepsilon,i}+d_{\varepsilon,i}+u_{0,i}\to -\infty$ a.e. in $\mathbb{T}^2$ for $i=1,2$.

If $w_i+u_{0,i}=-\ln A_i$ a.e. in $\mathbb{T}^2$, then in view of  $\int_{\mathbb{T}^2} w_{\varepsilon,i}+u_{0,i}dx=0$, we see that $A_i=1$ and $w_i+u_{0,i}=0$ a.e. in $\mathbb{T}^2$.
Thus, $u_{\varepsilon,i}=w_{\varepsilon,i}+d_{\varepsilon,i}+u_{0,i}\to w_i+\ln A_i+u_{0,i}=0$ a.e. in $\mathbb{T}^2$ and $u_{\varepsilon,i}\to0$ in $L^p(\mathbb{T}^2)$ for some $p>1$. Now we complete the proof of Lemma \ref{Lp}.
\end{proof}

In the following lemma, we get the detailed information about topological solutions.
\begin{lemma}\label{speedofconvergence}
Let $(u_{\varepsilon,1}, u_{\varepsilon,2})$ be a sequence of topological solutions of (\ref{maineq})
Then we have as $\varepsilon\to0$,

$(i)$  $u_{\varepsilon,i}\to0$ in $C^m_{\textrm{loc}}(\mathbb{T}^2\setminus Z)$ for any $m\in \mathbb{Z}^+$ and faster than any other power of $\varepsilon>0$;

$(ii)$ for $1\le j\neq i\le2$, $\frac{1}{\varepsilon^2} e^{u_{\varepsilon,j}}(1-e^{u_{\varepsilon,i}})\to4\pi\sum^{N_{i}}_{j=1}\delta_{p_{j,i}}$  and

 $\frac{1}{\varepsilon^2} (1-e^{u_{\varepsilon,1}})(1-e^{u_{\varepsilon,2}})\to4\pi\Big(\sum^{N_{1}}_{j=1}\delta_{p_{j,1}}\Big)\Big(\sum^{N_{2}}_{j=1} \delta_{p_{j,2}}\Big)$ weakly in the sense of measure in $\mathbb{T}^2$.
\end{lemma}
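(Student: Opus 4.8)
The plan is to upgrade the weak $L^p$-convergence from Lemma \ref{Lp}(ii) to the strong decay statements $(i)$ and $(ii)$ by exploiting the structure of the equations away from the vortex set $Z=\{p_{j,i}\}$ together with elliptic regularity and a careful analysis of the integrals $\frac{1}{\varepsilon^2}e^{u_{\varepsilon,j}}(1-e^{u_{\varepsilon,i}})$. First I would fix a compact set $K\subset\mathbb{T}^2\setminus Z$. On $K$ the right-hand sides of \eqref{maineq} vanish, so $\Delta u_{\varepsilon,i}=-\frac{1}{\varepsilon^2}e^{u_{\varepsilon,j}}(1-e^{u_{\varepsilon,i}})$, and the key observation is that $0\le \frac{1}{\varepsilon^2}e^{u_{\varepsilon,j}}(1-e^{u_{\varepsilon,i}})\le \frac{1}{\varepsilon^2}(1-e^{u_{\varepsilon,i}})\cdot 1$ while from Lemma \ref{Lp}(ii) we know $u_{\varepsilon,i}\to 0$, hence $1-e^{u_{\varepsilon,i}}=O(|u_{\varepsilon,i}|)\to 0$ a.e.; the subtlety is the $\varepsilon^{-2}$ factor. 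To control it I would introduce $f_{\varepsilon,i}:=\frac{1}{\varepsilon^2}e^{u_{\varepsilon,j}}(1-e^{u_{\varepsilon,i}})\ge 0$, which by \eqref{bddofintegration} is bounded in $L^1(\mathbb{T}^2)$, and show it is in fact uniformly bounded in $L^\infty_{\mathrm{loc}}(\mathbb{T}^2\setminus Z)$.

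The mechanism for the local $L^\infty$ bound is a bootstrap on the away-from-vortices region. Writing $v_{\varepsilon,i}:= -u_{\varepsilon,i}>0$, we have $\Delta v_{\varepsilon,i}=f_{\varepsilon,i}\ge 0$ on $K$, i.e.\ $v_{\varepsilon,i}$ is subharmonic there, and $v_{\varepsilon,i}\to 0$ in $L^p$; by the mean-value/sub-mean-value inequality for subharmonic functions this already forces $\|v_{\varepsilon,i}\|_{L^\infty(K')}\to 0$ on slightly smaller $K'$. Then on $K'$ we have the pointwise bound $\frac{1}{\varepsilon^2}(1-e^{u_{\varepsilon,i}})\le C\varepsilon^{-2}\|v_{\varepsilon,i}\|_{L^\infty(K')}$, which at face value still blows up; the escape is to iterate using the coupled structure, namely $f_{\varepsilon,i}\le \varepsilon^{-2}v_{\varepsilon,j}$ together with $f_{\varepsilon,j}\le \varepsilon^{-2}v_{\varepsilon,i}$, and to use a Harnack-type / Green's representation argument on $\mathbb{T}^2\setminus Z$: writing $u_{\varepsilon,i}=w_{\varepsilon,i}+u_{0,i}+d_{\varepsilon,i}$ as in Lemma \ref{Lp}, one has $w_{\varepsilon,i}(x)=\int_{\mathbb{T}^2}G(x,y)\big(f_{\varepsilon,i}(y)-\tfrac{4\pi N_i}{|\mathbb{T}^2|}\big)dy$, and since $f_{\varepsilon,i}$ is $L^1$-bounded and, by \eqref{bddofintegration} again, concentrates all its mass near the points $p_{j,i}$ (because on any $K$ away from $Z$, $f_{\varepsilon,i}=O(\varepsilon^{-2}v_{\varepsilon,j})$ which we will show $\to 0$), the function $w_{\varepsilon,i}$ is bounded in $C^0_{\mathrm{loc}}(\mathbb{T}^2\setminus Z)$. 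Combined with $d_{\varepsilon,i}\to 0$ (shown in the proof of Lemma \ref{Lp}, case $A_i=1$), this gives a \emph{uniform} bound $|u_{\varepsilon,i}|\le C_K$ on $K$, and then standard elliptic $L^p$ and Schauder estimates applied to $\Delta u_{\varepsilon,i}=-f_{\varepsilon,i}$ with $f_{\varepsilon,i}$ bounded in every $L^q_{\mathrm{loc}}(\mathbb{T}^2\setminus Z)$ promote this to $C^m_{\mathrm{loc}}$ bounds.

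Once the $C^m_{\mathrm{loc}}(\mathbb{T}^2\setminus Z)$ bounds are in hand, the faster-than-any-power decay in $(i)$ follows from a direct iteration: on $K$ we have $|\Delta u_{\varepsilon,i}|=f_{\varepsilon,i}\le \varepsilon^{-2}|u_{\varepsilon,j}|$, so if $\|u_{\varepsilon,j}\|_{C^0(K)}=O(\varepsilon^{\alpha})$ on a neighborhood, then by the Green representation $\|u_{\varepsilon,i}\|_{C^0(K')}\le C(\varepsilon^{\alpha-2}\cdot(\text{mass localized away from }Z))+\dots$; the point is that the mass of $f_{\varepsilon,i}$ lying in $\mathbb{T}^2\setminus Z$ is itself $O(\varepsilon^{\alpha-2})\cdot O(\varepsilon^2)=O(\varepsilon^\alpha)$-small-squared type, so one gains a full power (or more) of $\varepsilon$ at each step, giving $\|u_{\varepsilon,i}\|_{C^0_{\mathrm{loc}}(\mathbb{T}^2\setminus Z)}=O(\varepsilon^m)$ for every $m$, and $C^m$ versions follow by differentiating the equation. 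For part $(ii)$, since $f_{\varepsilon,i}\ge 0$, is bounded in $L^1(\mathbb{T}^2)$ with total mass exactly $4\pi N_i$ by \eqref{bddofintegration}, and $f_{\varepsilon,i}\to 0$ uniformly on compact subsets of $\mathbb{T}^2\setminus Z$ by the decay just established, the sequence of measures $f_{\varepsilon,i}\,dx$ must converge weakly-$*$ to a nonnegative measure supported on $Z=\{p_{1,i},\dots,p_{N_i,i}\}$ with total mass $4\pi N_i$; to identify it as $4\pi\sum_{j}\delta_{p_{j,i}}$ I would test against a cutoff $\varphi$ equal to $1$ near a single $p_{j,i}$ and supported near it, integrate the equation $\Delta u_{\varepsilon,i}+f_{\varepsilon,i}=4\pi\sum_k\delta_{p_{k,i}}$ against $\varphi$, and note $\int \varphi\,\Delta u_{\varepsilon,i}=\int u_{\varepsilon,i}\Delta\varphi\to 0$ since $\Delta\varphi$ is supported in $\mathbb{T}^2\setminus Z$ where $u_{\varepsilon,i}\to 0$; this pins the mass near each $p_{j,i}$ to be exactly $4\pi$ times its multiplicity, giving the claimed weak limit. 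The product statement $\frac{1}{\varepsilon^2}(1-e^{u_{\varepsilon,1}})(1-e^{u_{\varepsilon,2}})\to 4\pi(\sum\delta_{p_{j,1}})(\sum\delta_{p_{j,2}})$ then follows by writing $\frac{1}{\varepsilon^2}(1-e^{u_{\varepsilon,1}})(1-e^{u_{\varepsilon,2}})=e^{-u_{\varepsilon,2}}f_{\varepsilon,1}=e^{-u_{\varepsilon,1}}f_{\varepsilon,2}$, using $e^{-u_{\varepsilon,i}}\to 1$ on compacts away from $Z$ and the concentration of $f_{\varepsilon,i}$, where near $p_{j,1}$ the factor $e^{-u_{\varepsilon,2}}\to 1$ (since $p_{j,1}\notin\{p_{k,2}\}$ generically, or handling coincident vortices by a limiting/symmetry argument) converts the mass of $f_{\varepsilon,1}$ to that of the product. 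I expect the main obstacle to be the self-improving $\varepsilon$-power step: rigorously controlling how much of the $L^1$-mass $4\pi N_i$ of $f_{\varepsilon,i}$ can leak outside shrinking neighborhoods of $Z$, which is what makes the bootstrap close and yields decay faster than any power of $\varepsilon$.
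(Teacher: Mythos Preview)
Your approach to part (i) and to the first weak limit in (ii) is essentially the paper's: subharmonicity of $-u_{\varepsilon,i}$ away from $Z$ plus the mean-value inequality upgrades the $L^p$ convergence of Lemma~\ref{Lp} to $L^\infty_{\mathrm{loc}}$, and then a two-step iteration (bound $\|u_{\varepsilon,i}\|_{L^1}$ on a $\delta$-interior via the inequality $\tfrac{|t|}{1+|t|}\le 1-e^t$, then bound the $f_{\varepsilon,i}$-mass on a $2\delta$-interior by testing \eqref{maineq} against a cutoff and integrating by parts) gains a factor $\varepsilon^2$ per round. The paper carries this out with cutoffs rather than Green's representation, but the mechanism is identical; your description of the bootstrap is vaguer than it needs to be, but the idea is right.

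The genuine gap is in the product limit $\frac{1}{\varepsilon^2}(1-e^{u_{\varepsilon,1}})(1-e^{u_{\varepsilon,2}})$. First, the identity you write is false: $e^{-u_{\varepsilon,2}}f_{\varepsilon,1}=\frac{1}{\varepsilon^2}(1-e^{u_{\varepsilon,1}})$, not the product; the correct factorization is $(e^{-u_{\varepsilon,2}}-1)f_{\varepsilon,1}$. More seriously, even with the right algebra your scheme cannot close at a \emph{coincident} vortex $p$ (where $\nu_1,\nu_2>0$): there the multiplier $e^{-u_{\varepsilon,2}}-1$ blows up like $|x-p|^{-2\nu_2}$ while $f_{\varepsilon,1}$ concentrates mass $4\pi\nu_1$, and nothing in your argument pins the limit mass down as $4\pi\nu_1\nu_2$. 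The hand-wave ``limiting/symmetry argument'' does not produce this coefficient; it is not forced by soft measure theory. The paper's device is a Pohozaev-type identity: writing $v_{\varepsilon,i}=u_{\varepsilon,i}-2\nu_i\ln|x-p|$ on $B_r(p)$, one multiplies the equation for $v_{\varepsilon,i}$ by $\nabla u_{\varepsilon,j}\cdot x$, integrates, and after the standard manipulations obtains
\[
\int_{B_r(p)}\frac{2}{\varepsilon^2}(1-e^{u_{\varepsilon,1}})(1-e^{u_{\varepsilon,2}})\,dx \;=\; 8\pi\nu_1\nu_2 \;+\; (\text{boundary terms on }\partial B_r(p)),
\]
with the boundary terms tending to zero by part (i). This is the missing ingredient; without it you have neither an a priori $L^1$ bound on the product nor the exact limiting coefficient.
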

\begin{proof}
Denote $\mathbb{T}^2_\delta\equiv\{x\in\mathbb{T}^2\ |\ \textrm{dist}(x,p_{j,i})\ge\delta\ \ \mbox{for all}\ \ i,j\}$.
Since $u_{\varepsilon,i}<0$ on $\mathbb{T}^2$, we note that $u_{\varepsilon,i}$ is subharmonic in $\mathbb{T}^2_\delta$. By using the mean value theorem and Lemma \ref{Lp},
we have that
\begin{equation}\label{linftybdd}
0\le-u_{\varepsilon,i}\le\frac{1}{|\mathbb{T}^2_\delta|}\|u_{\varepsilon,i}\|_{L^1(\mathbb{T}^2_\delta)}\to0\ \mbox{ as} \ \varepsilon\to0\ \mbox{ on }\  \mathbb{T}^2_{2\delta}.
\end{equation}
We also have the following inequality,
\begin{equation}\label{ele}
\frac{|t|}{1+|t|}\le|1-e^t|\ \ \textrm{for any }\ t\in\mathbb{R}.
\end{equation}
By using   \eqref{bddofintegration} and \eqref{linftybdd}, we deduce the estimate
\begin{equation}
\begin{aligned}\label{l1}
\int_{\mathbb{T}^2_\delta}|u_{\varepsilon,i}|dx&\le(1+\|u_{\varepsilon,i}\|_{L^\infty(\mathbb{T}^2_\delta)})\int_{\mathbb{T}^2_\delta}\frac{|u_{\varepsilon,i}|}{1+|u_{\varepsilon,i}|}dx
\\&\le(1+\|u_{\varepsilon,i}\|_{L^\infty(\mathbb{T}^2_\delta)})e^{\|u_{\varepsilon,j}\|_{L^\infty(\mathbb{T}^2_\delta)}}\int_{\mathbb{T}^2_\delta}e^{u_{\varepsilon,j}}(1-e^{u_{\varepsilon,i}})dx
\\&\le8\pi e N_i\varepsilon^2.
\end{aligned}
\end{equation}
Let $\phi\in C^\infty(\overline{\mathbb{T}^2})$ satisfy
\begin{equation}\left\{
\begin{split}
\phi=0 &\qquad\qquad\text{ in } \mathbb{T}^2\setminus \mathbb{T}^2_{\delta} \\
\phi=1 &\qquad\qquad\text{ in } \mathbb{T}^2_{2\delta}\\
\end{split}\right.
\end{equation}
and $0\leq \phi\leq 1$. By using  $\phi$ as a test function in (\ref{maineq}) and (\ref{l1}), we  get that
\begin{equation}
\begin{aligned}\label{usingl1}
\frac{1}{\varepsilon^2}\int_{\mathbb{T}^2_{2\delta}}e^{u_{\varepsilon,j}}(1-e^{u_{\varepsilon,i}})dx&\le
\frac{1}{\varepsilon^2}\int_{\mathbb{T}^2}e^{u_{\varepsilon,j}}(1-e^{u_{\varepsilon,i}})\phi dx
\\&=-\int_{\mathbb{T}^2}\Delta u_{\varepsilon,i}\phi dx=
-\int_{\mathbb{T}^2} u_{\varepsilon,i}\Delta\phi dx\\&\le c_\delta\|u_{\varepsilon,i}\|_{L^1(\mathbb{T}^2_\delta)}\le C_\delta\varepsilon^2,
\end{aligned}
\end{equation}
for some constants $c_\delta,\ C_\delta>0$.
By a suitable iteration of (\ref{l1}), (\ref{usingl1}), and the elliptic estimates, we deduce that $(i)$ holds.
In other words,
for any small $\delta>0$  and any $m, n \in \mathbb{Z}^+$,  there exists a constant $c_{\delta,m,n}>0$ such that
\begin{equation}\label{conto0}\sup_{\mathbb{T}^2_{2\delta}}\Big(\sum_{|\alpha|=0}^m|D^\alpha u_{\varepsilon,i}|\Big)\le c_{\delta,m,n}\varepsilon^n.
\end{equation}

Next, if we take $\phi\in C^\infty(\mathbb{T}^2)$ as a test function into (\ref{maineq}), from Lemma \ref{Lp}, we see that
\begin{equation}\begin{aligned}
&\Big|\int_{\mathbb{T}^2}  \frac{1}{\varepsilon^2} e^{u_{\varepsilon,j}}(1-e^{u_{\varepsilon,i}})\phi dx-4\pi\sum^{N_{i}}_{j=1} \phi(p_{j,i})\Big|
\\&=\Big|\int_{\mathbb{T}^2}-\Delta u_{\varepsilon,i}\phi dx\Big|=\Big|\int_{\mathbb{T}^2}-u_{\varepsilon,i}\Delta\phi dx\Big|
\\&\le\|\phi\|_{C^2(\mathbb{T}^2)}\|u_{\varepsilon,i}\|_{L^1(\mathbb{T}^2)}\to0\ \ \textrm{as}\ \ \varepsilon\to0.
\end{aligned}
\end{equation}

Choose small $r>0$ such that $B_r(p_{j,i})\cap B_r(p_{j',i'})=\emptyset$ if $p_{j,i}\neq p_{j',i'}$ and  let
$$v_{\varepsilon,i}(x)=u_{\varepsilon,i}(x)-2\nu_{i}\ln|x-p|\qquad\text{ on }\qquad B_r(p),$$ where  $\nu_i=0$ if  $p\notin\cup_{j=1}^{d_i}\{p_{j,i}\}$ and $\nu_i=\#\{p_{j,i}|p_{j,i}=p \}$. Then $v_{\varepsilon,i}$ satisfies
\begin{equation}\label{maineqq}
\Delta v_{\varepsilon,i}+\frac{1}{\varepsilon^2} e^{u_{\varepsilon,j}}(1-e^{u_{\varepsilon,i}})=0\ \ \textrm{on}\ \ B_r(p).
\end{equation}
For the sake of simplicity, we assume that $p=0$.
Multiplying (\ref{maineqq}) by $\nabla u_{\varepsilon,j}\cdot x$ $(1\le j\neq i\le 2)$ and integrating over $B_r(0)$ (see \cite{CCL}), we obtain the Pohozaev
type identity
\begin{equation*}
\begin{aligned}
&\int_{\partial B_r(0)}\Big[\frac{2(\nabla u_{\varepsilon,1}\cdot x)(\nabla u_{\varepsilon,2}\cdot x)}{|x|}-(\nabla u_{\varepsilon,1}\cdot\nabla u_{\varepsilon,2})|x|-\frac{1}{\varepsilon^2}(1- e^{u_{\varepsilon,1}})(1-e^{u_{\varepsilon,2}})|x|]d\sigma
\\&=-\int_{B_r(0)}\frac{2}{\varepsilon^2}(1- e^{u_{\varepsilon,1}})(1-e^{u_{\varepsilon,2}})dx+8\pi\nu_1\nu_2.
\end{aligned}
\end{equation*}
By using (\ref{conto0}), we have
$$\lim_{\varepsilon\to0}\int_{B_r(p)}\frac{2}{\varepsilon^2}(1- e^{u_{\varepsilon,1}})(1-e^{u_{\varepsilon,2}})dx=8\pi\nu_1\nu_2.$$
  Now we complete the proof of Lemma \ref{speedofconvergence}.
 \end{proof}

The existence of topological  solution (in fact, maximal solution) of \eqref{maineq} can be proved by Lemma \ref{Lp} and \cite[Theorem 1.1-(i),(ii)]{LP}.
Hence, to prove Theorem \ref{uniqueness}, it suffices to prove the uniqueness property.
 To prove Theorem \ref{uniqueness}, we argue by contradiction and suppose that there exist two sequences of distinct topological solutions $(u_{\varepsilon,1},u_{\varepsilon,2})$ and $(\tilde{u}_{\varepsilon,1},\tilde{u}_{\varepsilon,2})$ of (\ref{maineq}).
 Without loss of generality, we may assume that there exists $x_\varepsilon\in\mathbb{T}^2$  such that
 \begin{equation*}
 |u_{\varepsilon,1}(x_\varepsilon)-\tilde{u}_{\varepsilon,1}(x_\varepsilon)|=\|u_{\varepsilon,1}-\tilde{u}_{\varepsilon,1}\|_{L^\infty(\mathbb{T}^2)}\ge\|u_{\varepsilon,2}-\tilde{u}_{\varepsilon,2}\|_{L^\infty(\mathbb{T}^2)},
 \end{equation*}
 and $x_\varepsilon\to p $ for some $p$ in $\mathbb{T}^2$.
 Set $A_\varepsilon\equiv\frac{u_{\varepsilon,1}-\tilde{u}_{\varepsilon,1}}{\|u_{\varepsilon,1}-\tilde{u}_{\varepsilon,1}\|_{L^\infty(\mathbb{T}^2)}}$ and $B_\varepsilon\equiv\frac{u_{\varepsilon,2}-\tilde{u}_{\varepsilon,2}}{\|u_{\varepsilon,1}-\tilde{u}_{\varepsilon,1}\|_{L^\infty(\mathbb{T}^2)}} $.
 Then $(A_\varepsilon,B_\varepsilon)$ satisfies
\begin{equation}
\begin{aligned}\label{abmaineq}
\left \{
\begin{array}{ll}
\Delta A_\varepsilon-\frac{1}{\varepsilon^2} e^{\tilde{u}_{\varepsilon,2}+\eta_{\varepsilon,1}}A_\varepsilon+\frac{1}{\varepsilon^2} e^{\eta_{\varepsilon,2}}(1-e^{u_{\varepsilon,1}})B_\varepsilon=0\quad\mbox{on }~ \mathbb{T}^2,
\\
\\
\Delta B_\varepsilon-\frac{1}{\varepsilon^2} e^{\tilde u_{\varepsilon,1}+\eta_{\varepsilon,2}}B_\varepsilon+\frac{1}{\varepsilon^2} e^{\eta_{\varepsilon,1}}(1-e^{u_{\varepsilon,2}})A_\varepsilon=0\quad\mbox{on }~ \mathbb{T}^2,
\end{array}\right.
\end{aligned}
\end{equation}
where $\eta_{\varepsilon,i}$ is   between $u_{\varepsilon,i}$ and $\tilde{u}_{\varepsilon,i}$, $i=1,2$.
Choose small $r_0>0$ such that $B_{r_0}(p_{j,i})\cap B_{r_0}(p_{j',i'})=\emptyset$ if $p_{j,i}\neq p_{j',i'}$
We consider the scaled functions $$\hat{u}_{\varepsilon,i}(y)=u_{\varepsilon,i}(\varepsilon y+p),\ \ \bar{u}_{\varepsilon,i}(y)=\tilde{u}_{\varepsilon,i}(\varepsilon y+p)\ \ \textrm{in}\ B_{\frac{r_0}{\varepsilon}}(0)\equiv\Big\{y\in\mathbb{R}^2\ \Big|\ |y|<\frac{r_0}{\varepsilon}\Big\}.$$
Then both $(\hat{u}_{\varepsilon,1}, \hat{u}_{\varepsilon,2})$  and $(\bar{u}_{\varepsilon,1}, \bar{u}_{\varepsilon,2})$ are solutions of
\begin{equation*}
\begin{aligned}
\left \{
\begin{array}{ll}
\Delta u_{\varepsilon,1}+ e^{u_{\varepsilon,2}}(1-e^{u_{\varepsilon,1}})=4\pi \nu_1\delta_{0}\quad\mbox{on }~  B_{\frac{r_0}{\varepsilon}}(0),
\\
\\
\Delta u_{\varepsilon,2}+ e^{u_{\varepsilon,1}}(1-e^{u_{\varepsilon,2}})=4\pi \nu_2\delta_{0}\quad\mbox{on }~  B_{\frac{r_0}{\varepsilon}}(0),
\end{array}\right.
\end{aligned}
\end{equation*}
where  $\nu_i=0$ if  $p\notin\cup_{j=1}^{d_i}\{p_{j,i}\}$ and $\nu_i=\#\{p_{j,i}|p_{j,i}=p \}$.\par
We show  the  gradient estimate for the topological solutions to \eqref{maineq} in the following lemma.
\begin{lemma}\label{gradgrad} There exists a constant $c>0$, independent of $r>0$ and $\varepsilon>0$, such that  \begin{equation*}
\Big|\nabla \hat{u}_{\varepsilon,i}(x)-\frac{2\nu_ix}{|x|^2}\Big|+\Big|\nabla \bar{u}_{\varepsilon,i}(x)-\frac{2\nu_ix}{|x|^2}\Big|\le c \ \ \textrm{on}\ \ B_{\frac{r}{\varepsilon}}(0)\ \textrm{for}\ \ i=1,2.
\end{equation*}
\end{lemma}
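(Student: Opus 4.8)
The plan is to reduce the estimate to a gradient bound for a logarithmic potential plus a harmonic correction. It suffices to bound $\nabla\hat{u}_{\varepsilon,i}(y)-\tfrac{2\nu_i y}{|y|^2}$, since the term with $\bar{u}_{\varepsilon,i}$ is handled identically: $(\tilde u_{\varepsilon,1},\tilde u_{\varepsilon,2})$ is again a topological solution of \eqref{maineq}, so Lemma \ref{speedofconvergence} applies to it verbatim. Put $v_{\varepsilon,i}(y):=\hat{u}_{\varepsilon,i}(y)-2\nu_i\ln|y|$, so that $\nabla v_{\varepsilon,i}(y)=\nabla\hat{u}_{\varepsilon,i}(y)-\tfrac{2\nu_i y}{|y|^2}$; since $\Delta(2\nu_i\ln|y|)=4\pi\nu_i\delta_0$ cancels the source, $v_{\varepsilon,i}$ solves $\Delta v_{\varepsilon,i}=-f_{\varepsilon,i}$ in $B_{r_0/\varepsilon}(0)$, where $f_{\varepsilon,i}:=e^{\hat{u}_{\varepsilon,j}}(1-e^{\hat{u}_{\varepsilon,i}})$ for $1\le j\neq i\le2$. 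Because $u_{\varepsilon,i}<0$ on $\mathbb{T}^2$ we get $0\le f_{\varepsilon,i}\le1$, and rescaling \eqref{bddofintegration} gives $\int_{B_{r_0/\varepsilon}(0)}f_{\varepsilon,i}\,dy=\tfrac1{\varepsilon^2}\int_{B_{r_0}(p)}e^{u_{\varepsilon,j}}(1-e^{u_{\varepsilon,i}})\,dx\le 4\pi N_i$. I would then write $v_{\varepsilon,i}=w_{\varepsilon,i}+h_{\varepsilon,i}$ on $B_{r_0/\varepsilon}(0)$, where $w_{\varepsilon,i}(y):=-\tfrac1{2\pi}\int_{B_{r_0/\varepsilon}(0)}\ln|y-z|\,f_{\varepsilon,i}(z)\,dz$ is the Newtonian potential of $f_{\varepsilon,i}$ and $h_{\varepsilon,i}:=v_{\varepsilon,i}-w_{\varepsilon,i}$ is harmonic.

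For the potential part one has $|\nabla w_{\varepsilon,i}(y)|\le\tfrac1{2\pi}\int_{B_{r_0/\varepsilon}(0)}\tfrac{f_{\varepsilon,i}(z)}{|y-z|}\,dz$; splitting the integral over $\{|y-z|<1\}$ and $\{|y-z|\ge1\}$ and using $\|f_{\varepsilon,i}\|_{L^\infty}\le1$ and $\|f_{\varepsilon,i}\|_{L^1}\le 4\pi N_i$ yields $|\nabla w_{\varepsilon,i}(y)|\le 1+2N_i$ for every $y\in B_{r_0/\varepsilon}(0)$, a bound independent of $\varepsilon$ and $r_0$. This is exactly the constant in the conclusion, and the remaining task is to show that the harmonic correction is negligible.

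The hard part is controlling $h_{\varepsilon,i}$, and this is where Lemma \ref{speedofconvergence} is used. On $\partial B_{r_0/\varepsilon}(0)$ one has $\hat{u}_{\varepsilon,i}(y)=u_{\varepsilon,i}$ evaluated on $\partial B_{r_0}(p)$, a set at positive distance from the vortex points, so Lemma \ref{speedofconvergence}$(i)$ gives $|\hat{u}_{\varepsilon,i}|\le1$ there once $\varepsilon$ is small; moreover the term $2\nu_i\ln|y|$ equals $2\nu_i\ln(r_0/\varepsilon)$, bounded by $C(1+|\ln\varepsilon|)$, and $\bigl|\int_{B_{r_0/\varepsilon}(0)}\ln|y-z|\,f_{\varepsilon,i}(z)\,dz\bigr|\le C(1+|\ln\varepsilon|)$ as well (again from $\|f_{\varepsilon,i}\|_{L^1}+\|f_{\varepsilon,i}\|_{L^\infty}\le C$ and $|y-z|\le 2r_0/\varepsilon$). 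Hence $\|h_{\varepsilon,i}\|_{L^\infty(\partial B_{r_0/\varepsilon}(0))}\le C(1+|\ln\varepsilon|)$, so $\|h_{\varepsilon,i}\|_{L^\infty(B_{r_0/\varepsilon}(0))}\le C(1+|\ln\varepsilon|)$ by the maximum principle. Since $h_{\varepsilon,i}$ is harmonic, the interior gradient estimate on balls of radius $r_0/(2\varepsilon)$ gives $|\nabla h_{\varepsilon,i}(y)|\le\tfrac{4\varepsilon}{r_0}\|h_{\varepsilon,i}\|_{L^\infty(B_{r_0/\varepsilon}(0))}\le C\varepsilon(1+|\ln\varepsilon|)\to0$ for $y\in B_{r_0/(2\varepsilon)}(0)$. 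Combined with the previous paragraph, $\bigl|\nabla\hat{u}_{\varepsilon,i}(y)-\tfrac{2\nu_i y}{|y|^2}\bigr|=|\nabla v_{\varepsilon,i}(y)|\le 1+2N_i+o_\varepsilon(1)$ on $B_{r_0/(2\varepsilon)}(0)$. On the remaining annulus $r_0/(2\varepsilon)\le|y|\le r_0/\varepsilon$ the point $\varepsilon y+p$ stays in the fixed annulus $\{r_0/2\le|x-p|\le r_0\}$, which is disjoint from the vortex set, so Lemma \ref{speedofconvergence}$(i)$ gives $|\nabla\hat{u}_{\varepsilon,i}(y)|=\varepsilon|\nabla u_{\varepsilon,i}(\varepsilon y+p)|\le C\varepsilon^2$ while $\bigl|\tfrac{2\nu_i y}{|y|^2}\bigr|=\tfrac{2\nu_i}{|y|}\le\tfrac{4\nu_i\varepsilon}{r_0}$, so the quantity is $O(\varepsilon)$ there. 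This establishes the bound on all of $B_{r_0/\varepsilon}(0)$, hence on every $B_{r/\varepsilon}(0)$ with $r\le r_0$, with a constant — essentially $1+2N_i$ — independent of $r$ and of small $\varepsilon$.

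The anticipated main obstacle is precisely the harmonic remainder: one must check that $\|h_{\varepsilon,i}\|_{L^\infty(B_{r_0/\varepsilon}(0))}$ grows only logarithmically in $1/\varepsilon$, hence much more slowly than the radius $r_0/\varepsilon$ of the domain, so that the unavoidable loss of one power of the distance to the boundary in the interior gradient estimate for harmonic functions is more than compensated; this is exactly where the sign condition $u_{\varepsilon,i}<0$ (for the $L^\infty$ bound on $f_{\varepsilon,i}$ and, through Lemma \ref{speedofconvergence}, for the boundary decay of $\hat u_{\varepsilon,i}$) and the uniform mass bound $\int f_{\varepsilon,i}\le4\pi N_i$ from \eqref{bddofintegration} enter. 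An equivalent route, should the logarithmic boundary bound be awkward to package this way, is to use the Green representation of $v_{\varepsilon,i}$ on $B_{r_0/\varepsilon}(0)$ directly, estimating $\int_{B_{r_0/\varepsilon}(0)}|\nabla_y G_{r_0/\varepsilon}(y,z)|\,f_{\varepsilon,i}(z)\,dz$ together with the gradient of the Poisson integral of the (small, by Lemma \ref{speedofconvergence}) boundary trace of $v_{\varepsilon,i}$.
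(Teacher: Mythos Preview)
Your argument is correct, and the core potential estimate is the same as the paper's: both split $\int\frac{f_{\varepsilon,i}(z)}{|y-z|}\,dz$ into a near part (controlled by $\|f_{\varepsilon,i}\|_{L^\infty}\le1$) and a far part (controlled by $\|f_{\varepsilon,i}\|_{L^1}\le4\pi N_i$).

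The packaging, however, is genuinely different. The paper stays in the original variable on $\mathbb{T}^2$ and uses the Green representation
\[
u_{\varepsilon,i}(x)-u_{0,i}(x)=\frac{1}{|\mathbb{T}^2|}\int_{\mathbb{T}^2}u_{\varepsilon,i}+\int_{\mathbb{T}^2}G(x,y)\,\frac{1}{\varepsilon^2}e^{u_{\varepsilon,j}}(1-e^{u_{\varepsilon,i}})\,dy.
\]
Since the regular part of $G$ and the contributions of $u_{0,i}$ from vortices other than $p$ have uniformly bounded gradient, there is no harmonic remainder to track; the near/far split is done at scale $\varepsilon$ (not $1$) and yields directly $\bigl|\nabla u_{\varepsilon,i}(x)-\tfrac{2\nu_i(x-p)}{|x-p|^2}\bigr|\le C+C'/\varepsilon$, after which the substitution $x=\varepsilon y+p$ gives the rescaled bound in one line. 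Your version works from the outset in the blown-up ball with the whole-space Newtonian potential; this is more elementary in that it avoids the torus Green's function, but the price is the extra harmonic correction $h_{\varepsilon,i}$, and you must invoke Lemma~\ref{speedofconvergence}(i) --- an input the paper's proof does not need --- to show its boundary data grows only like $|\ln\varepsilon|$ so that the interior gradient estimate kills it, and again to handle the outer annulus. The paper's route is shorter and logically cleaner (Lemma~\ref{gradgrad} becomes independent of Lemma~\ref{speedofconvergence}); yours is more portable to domains where a global Green's function with bounded regular part is not at hand.
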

\begin{proof}
We remind   the Green's function $G$ on $\mathbb{T}$   which satisfies
\begin{equation}
-\Delta_x G(x,y)=\delta_y-\frac{1}{|\mathbb{T}|},\ x, y\in \mathbb{T}\ \textrm{and}\ \int_\mathbb{T} G(x,y)dx=0.
\end{equation}
And we denote by  $\gamma(x,y)=G(x,y)+\frac{1}{2\pi}\ln|x-y|$ the regular part of $G$.
We also  recall that
 \begin{equation}
  u_{0,i}=-4\pi \sum_{j=1}^{N_i}G(x, p_{j,i}), \quad i=1, 2.
\end{equation}
Then by using the Green's representation formula for a solution $(u_{\varepsilon,1}, u_{\varepsilon,2})$ of (\ref{maineq}), we see that for $x\in \mathbb{T}$,
\begin{equation}
\begin{aligned}
u_{\varepsilon,i}(x)-u_{0,i}(x)
=\frac{1}{|\mathbb{T}|}\int_\mathbb{T} u_{\varepsilon,i}(y)dy+\int_\mathbb{T} G(x,y) \frac{1}{\varepsilon^2}e^{u_{\varepsilon,j}}(1-e^{u_{\varepsilon,i}})dy.
\end{aligned}
\end{equation}
Then we  see that for
 $x\in B_r(p)$,
 \begin{equation*}
\begin{aligned}
&\Big|\nabla u_{\varepsilon,i}(x)-\frac{2\nu_i (x-p)}{|x-p|^2}\Big|
\\&\le C+\frac{1}{2\pi\varepsilon^2}\int_{\mathbb{T}^2} \frac{e^{\hat{u}_{\varepsilon,j}}(1-e^{\hat{u}_{\varepsilon,i}})}{|x-y|}dy
\\&=C+\frac{1}{2\pi\varepsilon^2}\Big(\int_{B_\varepsilon(x)} \frac{e^{\hat{u}_{\varepsilon,j}}(1-e^{\hat{u}_{\varepsilon,i}})}{|x-y|}dy+\int_{\mathbb{T}^2\setminus B_\varepsilon(x)} \frac{e^{\hat{u}_{\varepsilon,j}}(1-e^{\hat{u}_{\varepsilon,i}})}{|x-y|}dy\Big)
\\&\le C+\frac{C'}{\varepsilon},
\end{aligned}
\end{equation*}
for some constants $C,\ C'>0$, independent of $r>0$ and $\varepsilon>0$.  The desired conclusion
follows by the substitution $x=\varepsilon x+p$, $\hat{u}_{\varepsilon,i}(x)=u_{\varepsilon,i}(\varepsilon x+p)$ and  $\bar{u}_{\varepsilon,i}(x)=\tilde{u}_{\varepsilon,i}(\varepsilon x+p)$.
\end{proof}
\begin{lemma}\label{linfty}
$\lim_{\varepsilon\to0}\sum_{i=1}^2\Big(\sup_{B_{\frac{r_0}{\varepsilon}}(0)}(|\hat{u}_{\varepsilon,i}-u_i|+|\bar{u}_{\varepsilon,i}-u_i|)\Big)=0$, where $(u_1,u_2)$ is  a unique topological solution of
\begin{equation}\begin{aligned} \label{u}
\left \{
\begin{array}{ll}
\Delta u_i+ e^{u_j}(1-e^{u_i})=4\pi\nu_i\delta_0 \ \textrm{in}\ \mathbb{R}^2,\  1\le j\neq i\le2; \\
\\ u_i<0, \ \sup_{\mathbb{R}^2\setminus B_1(0)}|\nabla u_i|<+\infty\ \ i=1,2;\\
\\ e^{u_2}(1-e^{u_1}),\  e^{u_1}(1-e^{u_2}), \  (1-e^{u_1})(1-e^{u_2})\in L^1(\mathbb{R}^2).
\end{array}\right.
\end{aligned}\end{equation}
\end{lemma}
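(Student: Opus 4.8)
The plan is a blow-up/compactness argument followed by a boundary-layer estimate that upgrades local convergence on fixed balls to uniform convergence on the expanding ball $B_{r_0/\varepsilon}(0)$.

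\textbf{Step 1 (compactness of the rescaled families).} Set $\hat v_{\varepsilon,i}(y)=\hat u_{\varepsilon,i}(y)-2\nu_i\ln|y|$, which is smooth across the origin and satisfies $\Delta\hat v_{\varepsilon,i}+e^{\hat u_{\varepsilon,j}}(1-e^{\hat u_{\varepsilon,i}})=0$ on $B_{r_0/\varepsilon}(0)$. Since $\hat u_{\varepsilon,i}<0$ the right-hand side is bounded by $1$, Lemma \ref{gradgrad} provides a uniform $C^1$-bound for $\hat v_{\varepsilon,i}$ on every fixed ball, and the rescaled form of \eqref{bddofintegration} gives $\int_{B_{r_0/\varepsilon}(0)}e^{\hat u_{\varepsilon,j}}(1-e^{\hat u_{\varepsilon,i}})\,dy\le 4\pi N_i$. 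Elliptic estimates then yield uniform $C^{2,\alpha}_{\mathrm{loc}}$-bounds, so along a subsequence $\hat v_{\varepsilon,i}\to v_i$ in $C^2_{\mathrm{loc}}(\mathbb{R}^2)$ and hence $\hat u_{\varepsilon,i}\to u_i:=v_i+2\nu_i\ln|\cdot|$ in $C^2_{\mathrm{loc}}(\mathbb{R}^2\setminus\{0\})$, with $(u_1,u_2)$ solving $\Delta u_i+e^{u_j}(1-e^{u_i})=4\pi\nu_i\delta_0$ in $\mathbb{R}^2$. The same applies to $\bar u_{\varepsilon,i}$.

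\textbf{Step 2 (the limit is the topological solution, and the whole family converges).} From $\hat u_{\varepsilon,i}<0$ and the strong maximum principle, $u_i<0$ (the case $u_i\equiv 0$ is incompatible with a nontrivial Dirac mass, and where it could occur the surviving component reduces to the scalar Chern--Simons equation, whose topological solution is unique). The bound $\sup_{\mathbb{R}^2\setminus B_1(0)}|\nabla u_i|<\infty$ passes to the limit from Lemma \ref{gradgrad}, and Fatou's lemma together with the uniform integral bound from Step 1 gives $e^{u_j}(1-e^{u_i}),\ (1-e^{u_1})(1-e^{u_2})\in L^1(\mathbb{R}^2)$. Thus $(u_1,u_2)$ satisfies all the conditions in \eqref{u}, so by Theorem B it is the unique such solution; in particular it is independent of the subsequence, and therefore $\hat u_{\varepsilon,i}\to u_i$ and $\bar u_{\varepsilon,i}\to u_i$ in $C^2_{\mathrm{loc}}(\mathbb{R}^2\setminus\{0\})$ along the whole family, with the differences $\hat u_{\varepsilon,i}-u_i$, $\bar u_{\varepsilon,i}-u_i$ (continuous across the origin) tending to $0$ locally uniformly on $\mathbb{R}^2$.

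\textbf{Step 3 (boundary-layer estimate — the main obstacle).} Fix $\sigma>0$. Since $(u_1,u_2)$ is a topological solution it decays at infinity (exponentially fast, by Theorem A), so choose $R=R(\sigma)$ with $|u_i(y)|<\sigma/3$ for $|y|\ge R$; on $\{|y|\le R\}$, Step 2 gives $|\hat u_{\varepsilon,i}-u_i|<\sigma$ once $\varepsilon$ is small. On $\{R\le|y|\le r_0/\varepsilon\}$ we use the maximum principle: choosing $r_0$ so that $B_{r_0}(p)$ contains no vortex other than possibly $p$, the function $-u_{\varepsilon,i}$ is nonnegative and subharmonic on the annulus $\{\varepsilon R\le|x-p|\le r_0\}$, hence its supremum is attained on the two bounding circles; on $\{|x-p|=r_0\}$ it is at most $c_\delta\varepsilon^n$ by Lemma \ref{speedofconvergence}, and on $\{|x-p|=\varepsilon R\}$ it equals $\max_{|y|=R}\bigl(-\hat u_{\varepsilon,i}(y)\bigr)\to\max_{|y|=R}\bigl(-u_i(y)\bigr)<\sigma/3$. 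Thus $|\hat u_{\varepsilon,i}(y)|<2\sigma/3$ for $R\le|y|\le r_0/\varepsilon$ and small $\varepsilon$, and with $|u_i(y)|<\sigma/3$ there this yields $|\hat u_{\varepsilon,i}-u_i|<\sigma$ on the whole annulus; likewise for $\bar u_{\varepsilon,i}$. Combining the two regions, $\sup_{B_{r_0/\varepsilon}(0)}\bigl(|\hat u_{\varepsilon,i}-u_i|+|\bar u_{\varepsilon,i}-u_i|\bigr)\to 0$, which is the claim. The crux is precisely this step: because $B_{r_0/\varepsilon}(0)$ expands, $C^2_{\mathrm{loc}}$-convergence is insufficient and one must rule out a slowly decaying layer near $|y|\sim r_0/\varepsilon$; this is where the super-polynomial smallness of $u_{\varepsilon,i}$ away from the vortices (Lemma \ref{speedofconvergence}) and the sign of $\Delta u_{\varepsilon,i}$ (so that $-u_{\varepsilon,i}$ obeys a maximum principle on the annulus) are indispensable.
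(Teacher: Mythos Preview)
Your argument is essentially sound, and Step~3 is in fact a different (and rather clean) route to the uniform estimate on the expanding ball; but Steps~1--2 rely on an ingredient you have not invoked.

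\textbf{A gap in Steps 1--2.} Lemma~\ref{gradgrad} gives only a uniform \emph{gradient} bound for $\hat v_{\varepsilon,i}$; it does not by itself preclude $\hat v_{\varepsilon,i}\to-\infty$ locally, and the integral bound you quote from \eqref{bddofintegration} controls only $\int e^{\hat u_{\varepsilon,j}}(1-e^{\hat u_{\varepsilon,i}})$, which is useless for this purpose if \emph{both} components drift to $-\infty$. The paper closes this with the Pohozaev part of Lemma~\ref{speedofconvergence}(ii): it yields $\int_{B_{r_0/\varepsilon}}(1-e^{\hat u_{\varepsilon,i}})\,dy\to 4\pi(\nu_1\nu_2+\nu_i)$, and then $\hat v_{\varepsilon,i}\to-\infty$ on any fixed ball would force (via the gradient bound) $\int_{B_R}(1-e^{\hat u_{\varepsilon,i}})\ge \pi R^2(1-o(1))$ for arbitrary $R$, a contradiction. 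The same lemma is what gives $(1-e^{u_1})(1-e^{u_2})\in L^1(\mathbb{R}^2)$ by Fatou in your Step~2; without it you cannot conclude $1-e^{u_i}\in L^1$ and hence that the limit is topological. Once this is inserted, your Steps~1--2 match the paper's.

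\textbf{Step 3: a genuinely different boundary-layer argument.} The paper upgrades from local to $\sup_{B_{r_0/\varepsilon}}$-convergence by matching masses: from Lemma~\ref{speedofconvergence} one has $\int_{B_{r_0/\varepsilon}}(1-e^{\hat u_{\varepsilon,i}})\to\int_{\mathbb{R}^2}(1-e^{u_i})$, hence $\int_{B_{r_0/\varepsilon}}|e^{\hat u_{\varepsilon,i}}-e^{u_i}|\to 0$, hence $\int|\hat u_{\varepsilon,i}-u_i|\to 0$ on the part away from the origin, and the uniform gradient bounds convert this $L^1$ smallness into $L^\infty$ smallness. Your argument instead exploits the sign structure directly: $\Delta(-u_{\varepsilon,i})=\varepsilon^{-2}e^{u_{\varepsilon,j}}(1-e^{u_{\varepsilon,i}})\ge 0$ on the vortex-free annulus $\{\varepsilon R\le|x-p|\le r_0\}$, so $-u_{\varepsilon,i}$ is trapped by its boundary values, both of which are small (Lemma~\ref{speedofconvergence}(i) on the outer circle, local convergence on the inner one). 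This sidesteps the integral-matching and the $L^1\!\to\!L^\infty$ bootstrap entirely and is arguably the more elementary route; the paper's approach, on the other hand, yields as a by-product the exact identities $\int_{\mathbb{R}^2}(1-e^{u_i})=4\pi(\nu_1\nu_2+\nu_i)$ for the limit profile.
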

\begin{proof}
We decompose \begin{equation}\label{decomposehatu}\hat{u}_{\varepsilon,i}(y)=2\nu_i\ln|y|+\hat{v}_{\varepsilon,i}(y).\end{equation}
Then $\hat{v}_{\varepsilon,i}$ $(i=1,2)$ satisfies
\begin{equation}\label{hatve}
\Delta \hat{v}_{\varepsilon,i}+ |y|^{2\nu_j} e^{\hat{v}_{\varepsilon,j}}(1-|y|^{2\nu_i}e^{\hat{v}_{\varepsilon,i}})=0
\ \textrm{in}\ B_{\frac{r_0}{\varepsilon}}(0),
\end{equation}
where $1\le j\neq i\le2$.
Since $\hat{u}_{\varepsilon,i}=2\nu_i\ln|y|+\hat{v}_{\varepsilon,i}<0$ on $B_{\frac{r_0}{\varepsilon}}(0)$, we  have
$$\hat{v}_{\varepsilon,i}\Big|_{\partial B_{R}(0)}< -2\nu_i\ln R\  \ \textrm{for any}\ R>0.$$
 By using the Green's representation formula for a solution $u_{\varepsilon,i}$ of (\ref{maineq})
(see Lemma \ref{gradgrad}), we see that there exists $c_0>0$ such that
\begin{equation}\label{gradgrad2}
|\nabla \hat{v}_{\varepsilon,i}(x)|\le c_0 \ \ \textrm{on}\ \ B_{\frac{r_0}{\varepsilon}}(0).
\end{equation}

We claim that $\hat{v}_{\varepsilon,i}$ is uniformly bounded in the $C^{2,\alpha}$ topology. To prove our claim,
we argue by contradiction and suppose that there exists $R_0>0$ such that
$$\lim_{\varepsilon\to0}\Big(\inf_{B_{R_0}(0)}\hat{v}_{\varepsilon,i}\Big)=-\infty.$$ Then (\ref{gradgrad2})
implies that $\lim_{\varepsilon\to0}\Big(\sup_{B_{R}(0)}\hat{v}_{\varepsilon,i}\Big)=-\infty$ for any  $R\ge R_0$.
Clearly Lemma \ref{speedofconvergence} shows that, for any $R\ge R_0$,
\begin{equation}
\begin{aligned}
\label{large}
8\pi (\nu_1\nu_2+\nu_i)&=\lim_{\varepsilon\to0}\int_{B_{\frac{r_0}{\varepsilon}}(0)}\frac{2}{\varepsilon^2}(1-e^{u_{\varepsilon,i}})dx\ge\lim_{\varepsilon\to0}\int_{B_R(0)}2(1-e^{\hat{u}_{\varepsilon,i}})dx
\\&=\lim_{\varepsilon\to0}\int_{B_R(0)}2(1-|x|^{2\nu_i}e^{\hat{v}_{\varepsilon,i}})dx
\ge\pi R^2.
\end{aligned}\end{equation}
Since the right hand side of (\ref{large}) could be arbitrarily large, we obtain a contradiction which  proves our claim.

Then we obtain a subsequence $\hat{v}_{\varepsilon,i}$ (still denoted in the same way) such that
\begin{equation}\label{convergencevep}
\hat{v}_{\varepsilon,i}\to v_i\ \textrm{uniformly in}\ C^2_{\textrm{loc}}(\mathbb{R}^2).
\end{equation}
Let us define $u_i(y)\equiv2\nu_i\ln|y|+v_i(y)$. In view of  (\ref{gradgrad2}),
\eqref{bddofintegration} and  Lemma \ref{speedofconvergence}, we see that $(u_1,u_2)$ satisfies (\ref{u}).
Since $\sup_{\mathbb{R}^2\setminus B_1(0)}|\nabla u_i|<+\infty$ and
$1-e^{u_i}\in L^1(\mathbb{R}^2)$, we  see that $(u_1,u_2)$ is a topological solution in $\mathbb{R}^2$.
Indeed, if there exists  a sequence $x_n\in\mathbb{R}^2$ such that,
$$\lim_{n\to\infty}|x_n|\to+\infty, \ \ \lim_{n\to\infty}u_i(x_n)=c\neq0,$$
then since $\sup_{|x|\ge1}|\nabla u_i(x)|\le C$,
there exist small $r_1>0$ and $c_0>0$, independent of $n$, such that
$$1-e^{u_i}\ge c_0>0\ \ \textrm{on}\ \ B_{r_1}(x_n).$$
Then $\int_{\mathbb{R}^2} (1-e^{u_i})dx\ge\sum_{n=1}^\infty\int_{B_{r_1}(x_n)}(1-e^{u_i})dx=+\infty$ which is a contradiction. Thus,  $(u_1,u_2)$ is a topological solution  of \eqref{u} in $\mathbb{R}^2$.

Moreover, by using a Pohozaev type identity (see Lemma \ref{speedofconvergence}),
we  have
\begin{equation}\label{intr}\int_{\mathbb{R}^2} (1-e^{u_1})(1-e^{u_2})dx=4\pi \nu_1\nu_2\ \ \mbox{and}\ \ \int_{\mathbb{R}^2} (1-e^{u_i})dx=4\pi (\nu_1\nu_2+\nu_i).\end{equation}
By \cite{LPY}, we also see that $u_i$ admits exponential decay at infinity.
Then in view of Lemma \ref{speedofconvergence}, \eqref{intr}, and the dominated convergence theorem, we get that
\begin{equation*}
\begin{aligned}
0&=\lim_{\varepsilon\to0}\int_{B_{\frac{r_0}{\varepsilon}}(0)}|e^{\hat{u}_{\varepsilon,i}}-e^{u_i}|dx
&\ge\frac{1}{2}\lim_{\varepsilon\to0}\int_{B_{\frac{r_0}{\varepsilon}}(0)}|\hat{u}_{\varepsilon,i}-u_i|dx,
\end{aligned}
\end{equation*}
which implies that \begin{equation*}
\lim_{\varepsilon\to0}\Big(\sup_{B_{\frac{r_0}{\varepsilon}}(0)}|\hat{u}_{\varepsilon,i}-u_i|\Big)=0
\end{equation*}
from  (\ref{convergencevep}),  $\sup_{\mathbb{R}^2\setminus B_1(0)}|\nabla u_i|<+\infty$, and  \eqref{gradgrad2}.

By Theorem B, we know that  a topological solution of \eqref{u} is unique. So, by applying the above arguments to $(\bar{u}_{\varepsilon,1},\bar{u}_{\varepsilon,2})$, we  complete the proof of Lemma \ref{linfty}.
\end{proof}

\section{ Proof of Theorem 1.1-1.2}   Firstly,   we focus on the proof of Theorem \ref{uniqueness}.

\textbf{Proof of Theorem \ref{uniqueness}}
Recall that
$$
  A_\varepsilon\equiv\frac{u_{\varepsilon,1}-\tilde{u}_{\varepsilon,1}}{\|u_{\varepsilon,1}-\tilde{u}_{\varepsilon,1}\|_{L^\infty(\mathbb{T}^2)}} \quad  \text{and} \quad B_\varepsilon\equiv\frac{u_{\varepsilon,2}-\tilde{u}_{\varepsilon,2}}{\|u_{\varepsilon,1}-\tilde{u}_{\varepsilon,1}\|_{L^\infty(\mathbb{T}^2)}} ,$$
 and $(A_{\varepsilon},B_{\varepsilon})$ satisfies \eqref{linear-sym-1}. We consider the following two possible cases.\\
\\
\textit{Case 1.} $\lim_{\varepsilon\to0}\frac{|x_\varepsilon-p|}{\varepsilon}<+\infty$:\\

In this case, there exists $x_0\in\mathbb{R}^2$ such that  $\lim_{\varepsilon\to0}\frac{x_\varepsilon-p}{\varepsilon}=x_0$.
Let   $\hat{A}_\varepsilon(y)\equiv A_\varepsilon(\varepsilon y+p)$ and $\hat{B}_\varepsilon(y)\equiv B(\varepsilon y+p)$.
 Then $(\hat{A}_\varepsilon,\hat{B}_\varepsilon)$ satisfies
\begin{equation*}
\begin{aligned}
\left \{
\begin{array}{ll}
\Delta \hat{A}_\varepsilon- e^{\bar{u}_{\varepsilon,2}+\hat{\eta}_{\varepsilon,1}}\hat{A}_\varepsilon+ e^{\hat{\eta}_{\varepsilon,2}}(1-e^{\hat{u}_{\varepsilon,1}})\hat{B}_\varepsilon=0\quad\mbox{on }~ B_{\frac{r_0}{\varepsilon}}(0),
\\
\\
\Delta \hat{B}_\varepsilon-e^{\bar{u}_{\varepsilon,1}+\hat{\eta}_{\varepsilon,2}}\hat{B}_\varepsilon+ e^{\hat{\eta}_{\varepsilon,1}}(1-e^{\hat{u}_{\varepsilon,2}})\hat{A}_\varepsilon=0\quad\mbox{on }~ B_{\frac{r_0}{\varepsilon}}(0),
\end{array}\right.
\end{aligned}
\end{equation*}
where $\hat{\eta}_{\varepsilon,i}$ is   between $\hat{u}_{\varepsilon,i}$ and $\bar{u}_{\varepsilon,i}$, $i=1,2$.
Then we obtain a subsequence $(\hat{A}_{\varepsilon},\hat{B}_\varepsilon)$ (still denoted in the same way) such that
\begin{equation*}
(\hat{A}_{\varepsilon},\hat{B}_\varepsilon)\to (\hat{A},\hat{B})\ \textrm{uniformly in}\ C^2_{\textrm{loc}}(\mathbb{R}^2)\times C^2_{\textrm{loc}}(\mathbb{R}^2),
\end{equation*}
where $(\hat{A},\hat{B})$ is a bounded solution of
 \begin{equation*}
\begin{aligned}
\left \{
\begin{array}{ll}
\Delta \hat{A}- e^{u_1+u_2}\hat{A}+ e^{u_2}(1-e^{u_1})\hat{B}=0\quad\mbox{on }~ \mathbb{R}^2,
\\ \\
\Delta \hat{B}-e^{u_1+u_2}\hat{B}+ e^{u_1}(1-e^{u_2})\hat{A}=0\quad\mbox{on }~ \mathbb{R}^2.
\end{array}\right.
\end{aligned}
\end{equation*}
By Theorem B,  we obtain $(\hat{A},\hat{B})\equiv (0,0)$.
However, we see that
\begin{equation*}
\begin{aligned}(0,0)&=(\hat{A}(x_0),\hat{B}(x_0))
\\&=\lim_{\varepsilon\to0} \Big(\hat{A}_\varepsilon\Big(\frac{x_\varepsilon-p}{\varepsilon}\Big),\hat{B}_\varepsilon\Big(\frac{x_\varepsilon-p}{\varepsilon}\Big)\Big)
\\&=\lim_{\varepsilon\to0} (A_\varepsilon(x_\varepsilon),B_\varepsilon(x_\varepsilon)),
\end{aligned}
\end{equation*}
where $|A_\varepsilon(x_\varepsilon)|=\frac{|u_{\varepsilon,1}(x_\varepsilon)-\tilde{u}_{\varepsilon,1}(x_\varepsilon)|}{\|u_{\varepsilon,1}-\tilde{u}_{\varepsilon,1}\|_{L^\infty(\mathbb{T}^2)}}=1$ from the choice of $x_\varepsilon$. It is    a contradiction.\\
\\
\textit{Case 2.}  $\lim_{\varepsilon\to0}\frac{|x_\varepsilon-p|}{\varepsilon}=+\infty$:\\

Let
 \begin{equation*}
\begin{aligned}
\left \{
\begin{array}{ll}
\hat{\hat{u}}_{\varepsilon,i}(y)=u_{\varepsilon ,i}(\varepsilon  y+x_\varepsilon )-2\nu_i\ln|\varepsilon  y+ x_\varepsilon -p|+2\nu_i\ln|x_\varepsilon -p|\quad\mbox{on }~ B_{\frac{|x_\varepsilon -p|}{2\varepsilon }}(0),
\\ \\
\bar{\bar{u}}_{\varepsilon,i}(y)=\tilde{u}_{\varepsilon ,i}(\varepsilon  y+x_\varepsilon )-2\nu_i\ln|\varepsilon  y+ x_\varepsilon -p|+2\nu_i\ln|x_\varepsilon -p|\quad\mbox{on }~ B_{\frac{|x_\varepsilon -p|}{2\varepsilon }}(0),
\end{array}\right.
\end{aligned}
\end{equation*}
where  $\nu_i=0$ if  $p\notin\cup_{j=1}^{d_i}\{p_{j,i}\}$ and $\nu_i=\#\{p_{j,i}|p_{j,i}=p \}$.
Then both $(\hat{\hat{u}}_{\varepsilon,1}, \hat{\hat{u}}_{\varepsilon,2})$  and $(\bar{\bar{u}}_{\varepsilon,1}, \bar{\bar{u}}_{\varepsilon,2})$ are solutions of
\begin{equation*}
\begin{aligned}
\left \{
\begin{array}{ll}
\Delta u_{\varepsilon,1}+ \Big|\frac{\varepsilon  y+ x_\varepsilon-p }{|x_\varepsilon-p |}\Big|^{2\nu_2}e^{u_{\varepsilon,2}}(1-\Big|\frac{\varepsilon  y+ x_\varepsilon-p }{|x_\varepsilon-p |}\Big|^{2\nu_1}e^{u_{\varepsilon,1}})=0\quad\mbox{on }~  B_{\frac{|x_\varepsilon-p |}{2\varepsilon }}(0),
\\
\\
\Delta u_{\varepsilon,2}+ \Big|\frac{\varepsilon  y+ x_\varepsilon-p }{|x_\varepsilon-p |}\Big|^{2\nu_1}e^{u_{\varepsilon,1}}(1-\Big|\frac{\varepsilon  y+ x_\varepsilon-p }{|x_\varepsilon-p |}\Big|^{2\nu_2}e^{u_{\varepsilon,2}})=0\quad\mbox{on }~  B_{\frac{|x_\varepsilon-p|}{2\varepsilon }}(0).
\end{array}\right.
\end{aligned}
\end{equation*}
Then the previous arguments, we see that $\lim_{\varepsilon\to0}\sum_{i=1}^2\Big(\sup_{B_{\frac{r_0}{\varepsilon}}(0)}(|\hat{\hat{u}}_{\varepsilon,i}-u_i|+|\bar{\bar{u}}_{\varepsilon,i}-u_i|)\Big)=0$, where $(u_1,u_2)$ is  a unique topological solution of
\begin{equation*}
\Delta u_i+ e^{u_j}(1-e^{u_i})=0 \ \textrm{in}\ \mathbb{R}^2,\  1\le j\neq i\le2.
\end{equation*}
Let   $\hat{\hat{A}}_\varepsilon(y)\equiv A_\varepsilon(\varepsilon  y+x_\varepsilon )$ and $\hat{\hat{B}}_\varepsilon(y)\equiv B(\varepsilon  y+x_\varepsilon )$.
 Then  on $B_{\frac{|x_\varepsilon -p|}{2\varepsilon }}(0)$, $(\hat{\hat{A}}_\varepsilon,\hat{\hat{B}}_\varepsilon)$ satisfies
\begin{equation*}
\begin{aligned}
\left \{
\begin{array}{ll}
\Delta \hat{\hat{A}}_\varepsilon- \Big|\frac{\varepsilon  y+ x_\varepsilon-p }{|x_\varepsilon-p |}\Big|^{2(\nu_1+\nu_2)}e^{\bar{\bar{u}}_{\varepsilon,2}+ \hat{\hat{\eta}}_{\varepsilon,1}}\hat{\hat{A}}_\varepsilon+ \Big|\frac{\varepsilon  y+ x_\varepsilon-p }{|x_\varepsilon-p |}\Big|^{2\nu_2}e^{\hat{\hat{\eta}}_{\varepsilon,2}}(1- \Big|\frac{\varepsilon  y+ x_\varepsilon-p }{|x_\varepsilon-p |}\Big|^{2\nu_1}e^{\hat{\hat{u}}_{\varepsilon,1}})\hat{\hat{B}}_\varepsilon=0,
\\
\\
\Delta \hat{\hat{B}}_\varepsilon- \Big|\frac{\varepsilon  y+ x_\varepsilon-p }{|x_\varepsilon-p |}\Big|^{2(\nu_1+\nu_2)}e^{\bar{\bar{u}}_{\varepsilon,1}+ \hat{\hat{\eta}}_{\varepsilon,2}}\hat{\hat{B}}_\varepsilon+\Big|\frac{\varepsilon  y+ x_\varepsilon-p }{|x_\varepsilon-p |}\Big|^{2\nu_1} e^{\hat{\hat{\eta}}_{\varepsilon,1}}(1- \Big|\frac{\varepsilon  y+ x_\varepsilon-p }{|x_\varepsilon-p |}\Big|^{2\nu_2}e^{\hat{\hat{u}}_{\varepsilon,2}})\hat{\hat{A}}_\varepsilon=0,
\end{array}\right.
\end{aligned}
\end{equation*}
where $\hat{\hat{\eta}}_{\varepsilon,i}$ is   between $\hat{\hat{u}}_{\varepsilon,i}$ and $\bar{\bar{u}}_{\varepsilon,i}$.
Then we obtain a subsequence $(\hat{\hat{A}}_{\varepsilon},\hat{\hat{B}}_\varepsilon)$ (still denoted in the same way) such that
\begin{equation*}
(\hat{\hat{A}}_{\varepsilon},\hat{\hat{B}}_\varepsilon)\to (\hat{\hat{A}},\hat{\hat{B}})\ \textrm{uniformly in}\ C^2_{\textrm{loc}}(\mathbb{R}^2)\times C^2_{\textrm{loc}}(\mathbb{R}^2),
\end{equation*}
where $(\hat{\hat{A}},\hat{\hat{B}})$ is a bounded solution of
 \begin{equation*}
\begin{aligned}
\left \{
\begin{array}{ll}
\Delta \hat{\hat{A}}- e^{u_1+u_2}\hat{\hat{A}}+ e^{u_2}(1-e^{u_1})\hat{\hat{B}}=0\quad\mbox{on }~ \mathbb{R}^2,
\\ \\
\Delta \hat{\hat{B}}-e^{u_1+u_2}\hat{\hat{B}}+ e^{u_1}(1-e^{u_2})\hat{\hat{A}}=0\quad\mbox{on }~ \mathbb{R}^2.
\end{array}\right.
\end{aligned}
\end{equation*}
Then Theorem B implies $(\hat{\hat{A}},\hat{\hat{B}})=(0,0)$.
However, we see that
\begin{equation*}
\begin{aligned}(0,0)&=(\hat{\hat{A}}(0),\hat{\hat{B}}(0))
\\&=\lim_{\varepsilon\to0} (\hat{\hat{A}}_\varepsilon(0),\hat{\hat{B}}_\varepsilon(0))
\\&=\lim_{\varepsilon\to0} (A_\varepsilon(x_\varepsilon),B_\varepsilon(x_\varepsilon)),
\end{aligned}
\end{equation*}
where $|A_\varepsilon(x_\varepsilon)|=\frac{|u_{\varepsilon,1}(x_\varepsilon)-\tilde{u}_{\varepsilon,1}(x_\varepsilon)|}{\|u_{\varepsilon,1}-\tilde{u}_{\varepsilon,1}\|_{L^\infty(\mathbb{T}^2)}}=1$ from the choice of $x_\varepsilon$, and we get a contradiction. So  Theorem \ref{uniqueness} is proved.\hfill $\square$

\textbf{Proof of Theorem \ref{entire}}
 For the proof of Theorem \ref{entire}, the main part of difference from the proof of Theorem \ref{uniqueness} is that we  need to consider the case:  maximum point $x_\varepsilon$ of $|u_{\varepsilon,i}-\tilde{u}_{\varepsilon,i}|$  diverge to $\infty$. In this case,  in view of Theorem A, we have $$u_{\varepsilon,i}(\varepsilon x+x_\varepsilon)\to0\quad\textrm{in}\quad C^2_{\textrm{loc}}(B_{\frac{r}{\varepsilon}}(0)).$$
 Moreover, by using Lemma \ref{linfty}, we see that
\begin{equation*}
(\tilde{A}_\varepsilon(y),\tilde{B}_\varepsilon):=(A_\varepsilon(\varepsilon y+x_\varepsilon), B_\varepsilon(\varepsilon y+x_\varepsilon))\to (\tilde{A},\tilde{B})\ \textrm{uniformly in}\ C^2_{\textrm{loc}}(\mathbb{R}^2)\times C^2_{\textrm{loc}}(\mathbb{R}^2),
\end{equation*}
where $(\tilde{A},\tilde{B})$ is a bounded solution of
 \begin{equation*}
\begin{aligned}
\left \{
\begin{array}{ll}
\Delta \tilde{A}-  \tilde{A}=0\quad\mbox{on }~ \mathbb{R}^2,
\\ \\
\Delta \tilde{B}- \tilde{B}=0\quad\mbox{on }~ \mathbb{R}^2.
\end{array}\right.
\end{aligned}
\end{equation*}
Then Theorem B implies $(\tilde{A},\tilde{B})=(0,0)$
which contradicts $1=\lim_{\varepsilon\to0}|A_\varepsilon(x_\varepsilon)|=|\tilde{A}(0)|$. Now we also complete the proof of Theorem \ref{entire}.\hfill $\square$
\end{section}

%
%
%
%
%
%
%
%
%
%
%
%
%
%

\bibliography{nsf}
\bibliographystyle{plain}

 \end{document}